\newtheorem{thm}{Theorem}[section]
\newtheorem{mthm}{Main Theorem}
\newtheorem{cor}[thm]{Corollary}
\newtheorem{lem}[thm]{Lemma}
\newtheorem{prop}[thm]{Proposition}
\theoremstyle{definition}
\newtheorem{notn}[thm]{Notation}
\theoremstyle{remark}
\newtheorem{rem}[thm]{Remark}
\title{The change of vertex energy when joining trees.}
\author[1]{Octavio Arizmendi} 
\author[2]{Saylé Sigarreta}
\date{\today}
\affil[1]{ Department of Probability and Statistics, Centro de Investigación en
Matemáticas, Guanajuato, México}
\affil[2]{Department of Probability and Statistics,Benemérita Universidad Autónoma de Puebla, Puebla, México}
\begin{document}

\maketitle

\abstract{In this manuscript we study how the vertex energy of a tree is affected when joined with a bipartite graph.  We find an alternating pattern with respect to the coalescence vertex: the energy decreases for vertices located at odd distances and increases for those located at even distances. }



\textbf{Keywords}: graph energy, graph join, trees.

\textbf{MSC classes}: 05C05, 05C09

\section{Introduction}

Gutman introduced in 1978, the energy of a graph, \cite{b2}, motivated by notion of  $\pi$-electron energy in
conjugated hydrocarbon molecules.   Forty years later, inspired by Non-Commutative Probability, Arizmendi and Ju\'arez-Romero \cite{b8}  introduced the concept of energy of a vertex (or vertex energy). Since we can calculate the energy of a graph by adding the individual energies of its vertices, it follows that the energy of a vertex must be understood as the contribution of this vertex to the energy of the graph. In this sense, studying vertex energies constitutes a tool to achieve a better understanding of the energy of a graph.  The basics of the theory were developed in \cite{b9}, where the authors derived fundamental inequalities and gave examples and counterexamples of natural conjectures for the vertex energy. 

Taking into account the developed theory and in search of an interpretation that would obey intuition and allow us to understand the interaction between local and global energy in graphs, in \cite{b11} the authors defined the graph energy game. An important conclusion is that, by considering the  vector of vertex energies as the payoff vector of the cooperative game, an interpretation of how the total energy is distributed among the vertices of the graph can be offered.

Motivated by the above, in the present work we continue the research in this direction. Specifically, the main objective of this manuscript is to study the effect produced in the vertex energies of a tree after being merged with a bipartite graph. Our main contribution is to show that the change in vertex energy has an alternating behavior, when considering the distance with the merging vertex. Let us state our main result, the definitions involved in the statement can be seen in Section \ref{s1}. 

\begin{mthm}
Let $T$ be a tree and $B$ be a bipartite graph.  Consider the coalescence of $T\circ B$ where  we have identified the vertices $v$ in $T$ and $u$ in $B$ where $deg_{B}(u)\geq 1$.
Let $w$ be a vertex in $T$ and $\hat{w}$ its corresponding copy in $T\circ B$ then

\begin{itemize}
    \item If $d(v,w)$ is odd, $\mathcal{E}_T(w) >\mathcal{E}_{T\circ B}(\hat{w}).$
    \item If $d(v,w)$ if is even, $\mathcal{E}_T(w) < \mathcal{E}_{T\circ B}(\hat{w})$.
\end{itemize}

\end{mthm}

As we explain below, in Section \ref{s2}, this is consistent with the intuition that vertex energy acts as a payoff in a cooperative game.


\section{Preliminaries}\label{s1}
In this section we introduce basic results on Graph Theory, Graph Energy and Vertex Energy for further reference.

\subsection{Basic definitions in Graph Theory}

In this work we will only consider simple finite undirected graphs. A  simple undirected graph is a pair $G=(V,E)$, where $E\subseteq V\times V$ is a set such that $(v,v)\notin E$ for all $v\in V$, and $(v,w)\in E$ implies that $(w,v)\in E$. $V$ is called the vertex set and $E$ the edge set. 
The degree $deg_{G}(v)$ of a vertex $v$ is the number of edges at $v$. The distance $d_{G}(u, v)$ in $G$ of two vertices $u, v$ is the length of a
shortest $u-v$ path in $G$. A connected graph that does not contain any cycle is called a tree and a graph that does not contain cycles of odd length is called bipartite.
Given $G$ and $H$ two graphs with disjoint vertex sets, $u \in G$ and $v \in H$, the graph $G \circ H$ is known as the coalescence of $G$ and $H$ with respect to $u$ and $v$. It is constructed from copies of $G$ and $H$ by identifying the vertices $u$ and $v$. 

Let $G=(V,E)$ be a graph of order $n$, where $V=\{v_{1},v_{2}, \dots, v_{n}\}$. The adjacency matrix $A (G)=(a_{ij})_{n \times n}$ of $G$ is defined by $a_{ij} = 1$ if $(v_{i},v_{j})\in E$ and 0 otherwise. The characteristic polynomial of $G$, denoted by $\phi_{G} (x)$,  is defined as the characteristic polynomial of $A(G)$, that is,
$\phi_{G}(x) =det (xI-A)=:\sum_{k=0}^{n}a_{k}x^{n-k}$,
 where $I$ is the identity matrix of size $n \times n$. In particular, it is verified that, $\phi_{G \circ H} (x)=\phi_{G} (x)\phi_{H \setminus v}(x)+\phi_{G \setminus u} (x)\phi_{H}(x)-x\phi_{G \setminus u}(x)\phi_{H \setminus v}(x).$ Also, if $G$ is a bipartite graph, then its characteristic polynomial is of the form $\phi_{G}(x)=\sum_{k=0}^{\lfloor n / 2\rfloor}(-1)^k b_{2k}(G)x^{n-2k}$, where $b_{2k}(G)=|a_{2k}(G)|$.

\subsection{Graph energy} 

The energy of $G$, denoted by $\mathcal{E}(G)$, is defined as
$\mathcal{E}(G)=Tr(|A(G)|),$
where $|A(G)| = (A (G)A(G)^{\ast})^{1/2}$. In particular, when $G$ is bipartite,  the  Coulson's integral formula \cite{b5} motivates the following definition of a quasi-order for bipartite graphs. For two bipartite graphs $G_1$ and $G_2$, we define the quasi-order $\preceq$ and write $G_1 \preceq G_2$ if $b_{2 k}\left(G_1\right) \leq b_{2 k}\left(G_2\right)$ for all $k$. If, in addition, at least one of the inequalities $b_{2 k}(G_1) \leq b_{2 k}(G_2)$ is strict, then we write $G_1 \prec G_2$.  The importance of this order lies in  the fact that it allows to compare the energy of two graphs. 

While developing the main results of this paper, we will need the following results.

\begin{lem} \label{l4.4} (Theorem 1.3, \cite{b14})
 Let $u v$ be an edge of $G$. Then
$$
\phi_{G}(x)=\phi_{G \setminus uv}(x)-\phi_{G \setminus \{u, v\}}(x)-2 \displaystyle\sum_{C \in \mathscr{C}(u v)}\phi_{G \setminus C}(x),$$
where $\mathscr{C}(u v)$ is the set of cycles containing $u v$.
\end{lem}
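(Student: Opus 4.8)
The plan is to derive the identity from Sachs' Coefficient Theorem, which expresses each coefficient of the characteristic polynomial in terms of the \emph{basic figures} (also called Sachs, elementary, or linear subgraphs) of the graph --- those subgraphs whose connected components are all single edges or cycles. Writing $\phi_G(x)=\sum_{k=0}^n a_k(G)\,x^{n-k}$, the theorem gives $a_k(G)=\sum_{S}(-1)^{p(S)}2^{c(S)}$, where the sum ranges over all basic figures $S$ on exactly $k$ vertices, $p(S)$ denotes the number of components and $c(S)$ the number of cycle components. The strategy is to fix $k$, classify the basic figures of $G$ according to how they interact with the edge $uv$, and then match each class bijectively with the basic figures of one of the three smaller graphs appearing on the right-hand side, finally reassembling the polynomials by summing over $k$.

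First I would observe that the basic figures of $G$ that do \emph{not} use the edge $uv$ are precisely the basic figures of $G\setminus uv$; hence $a_k(G)-a_k(G\setminus uv)$ equals the signed, weighted count of those basic figures of $G$ that \emph{do} contain $uv$. In any such figure the edge $uv$ lies inside a single component, which is either (a) the edge $uv$ standing alone as a $K_2$-component, or (b) a cycle $C\in\mathscr{C}(uv)$ passing through $uv$. In case (a), deleting that component sets up a bijection with the basic figures of $G\setminus\{u,v\}$ on $k-2$ vertices; since removing a $K_2$ lowers $p$ by one and leaves $c$ unchanged, each such figure contributes an extra factor $(-1)$ relative to $G\setminus\{u,v\}$, giving a total of $-a_{k-2}(G\setminus\{u,v\})$. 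In case (b), deleting the whole cycle $C$ matches with the basic figures of $G\setminus C$ on $k-\abs{V(C)}$ vertices; removing one cycle component lowers $p$ by one and $c$ by one, producing an extra factor $(-1)\cdot 2$, so the total contribution is $-2\sum_{C\in\mathscr{C}(uv)}a_{k-\abs{V(C)}}(G\setminus C)$.

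Combining the three pieces yields, for every $k$,
\[
a_k(G)=a_k(G\setminus uv)-a_{k-2}(G\setminus\{u,v\})-2\sum_{C\in\mathscr{C}(uv)}a_{k-\abs{V(C)}}(G\setminus C).
\]
I would then multiply by $x^{n-k}$ and sum over $k$, using that $G\setminus\{u,v\}$ has $n-2$ vertices and $G\setminus C$ has $n-\abs{V(C)}$ vertices, so that the index shifts align exactly with the exponents of the corresponding characteristic polynomials (e.g.\ $a_{k-2}(G\setminus\{u,v\})\,x^{n-k}=a_{k-2}(G\setminus\{u,v\})\,x^{(n-2)-(k-2)}$). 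Collecting terms then reproduces the claimed identity.

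The main obstacle I anticipate is the careful bookkeeping of the two exponents $p(S)$ and $c(S)$ under the deletion operations, since an off-by-one in either the sign $(-1)^{p(S)}$ or the power $2^{c(S)}$ would corrupt the formula; in particular, one must verify that the cycle case genuinely produces the surviving factor $2$ (from $c$ dropping by one) rather than cancelling it. A secondary point requiring care is confirming that the correspondences in cases (a) and (b) are genuine bijections --- that each basic figure of the smaller graph extends back uniquely to one of $G$ --- and handling the degenerate index ranges (e.g.\ $k<2$, or $k<\abs{V(C)}$) where the smaller graphs contribute zero, so that no boundary terms are lost.
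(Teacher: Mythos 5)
Your proof is correct: the classification of Sachs (basic) figures according to whether they avoid $uv$, use it as a $K_2$-component, or use it inside a cycle component, together with the sign/weight bookkeeping $(-1)^{p(S)}2^{c(S)}$, yields exactly the stated coefficient recurrence, and the index shifts match the vertex counts of $G\setminus\{u,v\}$ and $G\setminus C$ as you say. Note that the paper itself gives no proof of this lemma --- it is quoted verbatim as Theorem 1.3 of the cited monograph of Li, Shi and Gutman --- and your argument via Sachs' coefficient theorem is precisely the standard proof given there, so there is nothing to reconcile.
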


\begin{lem}\label{p4}(Theorem 4.18, \cite{b14})
Let $G$ and $H$ be graphs. Then
 $\mathcal{E}(G \circ H)\leq \mathcal{E}(G)+\mathcal{E}(H).$
\end{lem}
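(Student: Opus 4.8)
The plan is to realize the graph energy as a genuine matrix norm and then apply the triangle inequality after decomposing the adjacency matrix of the coalescence into two ``padded'' pieces, one for $G$ and one for $H$. Recall that for a symmetric matrix $A$ one has $\mathcal{E}(A)=\mathrm{Tr}\,|A|=\sum_i|\lambda_i(A)|=\|A\|_{*}$, the trace (nuclear) norm, i.e.\ the sum of the singular values. This is the point on which everything hinges, since $\|\cdot\|_{*}$ is a bona fide norm and in particular is subadditive.

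First I would fix an ordering of the vertices of $K:=G\circ H$ that exposes the block structure. Writing $z$ for the identified vertex (the common image of $u\in G$ and $v\in H$) and listing the vertices as $V(G)\setminus\{u\}$, then $z$, then $V(H)\setminus\{v\}$, the adjacency matrix takes the form
\[
  A(K)=
  \begin{pmatrix}
    A(G\setminus u) & \mathbf g & 0\\
    \mathbf g^{\mathsf T} & 0 & \mathbf h^{\mathsf T}\\
    0 & \mathbf h & A(H\setminus v)
  \end{pmatrix},
\]
where $\mathbf g$ and $\mathbf h$ are the indicator vectors of the neighbours of $u$ in $G$ and of $v$ in $H$, and the two corner blocks vanish because $K$ has no edges between $V(G)\setminus\{u\}$ and $V(H)\setminus\{v\}$. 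The crucial observation is that the diagonal entry at $z$ equals $0$ (the graphs are simple), so it can be split without remainder between the two sides.

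Next I would set $M_G$ equal to the matrix above with the blocks $\mathbf h,\mathbf h^{\mathsf T},A(H\setminus v)$ replaced by zeros, and $M_H$ equal to it with $A(G\setminus u),\mathbf g,\mathbf g^{\mathsf T}$ replaced by zeros, so that $A(K)=M_G+M_H$ exactly. By construction $M_G=A(G)\oplus 0_{|H|-1}$: in this ordering its restriction to $\bigl(V(G)\setminus\{u\}\bigr)\cup\{z\}$ is precisely $A(G)$, while the remaining $|H|-1$ coordinates contribute only zero rows and columns. Hence the nonzero spectrum of $M_G$ is exactly that of $A(G)$, the padding adds only zero eigenvalues, and therefore $\|M_G\|_{*}=\sum_i|\lambda_i(A(G))|=\mathcal{E}(G)$; symmetrically $\|M_H\|_{*}=\mathcal{E}(H)$.

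Finally, the triangle inequality for the trace norm gives
\[
  \mathcal{E}(G\circ H)=\|A(K)\|_{*}=\|M_G+M_H\|_{*}\le\|M_G\|_{*}+\|M_H\|_{*}=\mathcal{E}(G)+\mathcal{E}(H),
\]
which is the claim. There is no serious analytic obstacle here; the argument rests on two elementary but essential points that I would state carefully. The first is that $\mathcal{E}$ coincides with the trace norm, so that subadditivity of energy is inherited from subadditivity of a norm rather than from any additivity of eigenvalues (which fails, as $|\lambda|$ is nonlinear). The second, and the only place where one must be attentive, is the bookkeeping at the shared vertex $z$: because $K$ is simple its diagonal entry there is $0$, so assigning the $G$-edges at $z$ to $M_G$ and the $H$-edges to $M_H$ partitions $A(K)$ cleanly and neither side inherits a spurious loop. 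One could equivalently phrase the last step as the Ky Fan norm inequality $\sum_i\sigma_i(M_G+M_H)\le\sum_i\sigma_i(M_G)+\sum_i\sigma_i(M_H)$.
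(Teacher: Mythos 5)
Your proof is correct. The paper itself states this lemma without proof, citing Theorem 4.18 of Li--Shi--Gutman's \emph{Graph Energy}, and your argument --- splitting $A(G\circ H)=M_G+M_H$ with the identified vertex's zero diagonal entry allowing a clean partition of the edges, then invoking Ky Fan's subadditivity of the trace norm --- is precisely the standard proof of that cited theorem, so there is nothing to add.
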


\begin{lem} (Theorem 4.20, \cite{b14}) \label{p3}
If $F$ is an edge cut of a simple graph G, 
$\mathcal{E}(G \setminus F )\leq \mathcal{E}(G).$  
\end{lem}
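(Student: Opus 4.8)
The plan is to read the energy as the nuclear (trace) norm of the adjacency matrix and to exploit the block structure that an edge cut forces. Since $A(G)$ is real symmetric, its singular values are the absolute values of its eigenvalues, so $\mathcal{E}(G)=Tr\,|A(G)|=\sum_i|\lambda_i|=\|A(G)\|_{*}$, the nuclear norm. By definition an edge cut is the set $F=\partial(S)$ of all edges joining a vertex subset $S$ to its complement $V\setminus S$. Ordering the vertices of $S$ first, the adjacency matrix decomposes as
$$
A(G)=\begin{pmatrix} A_1 & B \\ B^{T} & A_2 \end{pmatrix},
$$
where $A_1=A(G[S])$, $A_2=A(G[V\setminus S])$, and $B$ records the cut edges. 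Removing $F$ deletes exactly the off-diagonal blocks, so
$$
A(G\setminus F)=\begin{pmatrix} A_1 & 0 \\ 0 & A_2 \end{pmatrix}.
$$

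First I would reduce the statement to the pinching inequality for the nuclear norm: the block-diagonal truncation of a matrix never has larger nuclear norm than the matrix itself. The key observation is that this truncation is a unitary average. Let $U=\begin{pmatrix} I & 0 \\ 0 & -I\end{pmatrix}$, a symmetric orthogonal involution; conjugation by $U$ flips the sign of the off-diagonal blocks and fixes the diagonal ones, so
$$
\begin{pmatrix} A_1 & 0 \\ 0 & A_2 \end{pmatrix}=\tfrac12\left(A(G)+U\,A(G)\,U\right).
$$
Because $U$ is orthogonal and the nuclear norm is unitarily invariant, $\|U\,A(G)\,U\|_{*}=\|A(G)\|_{*}$, whence the triangle inequality yields
$$
\mathcal{E}(G\setminus F)=\left\|\tfrac12\left(A(G)+U\,A(G)\,U\right)\right\|_{*}\le \tfrac12\left(\|A(G)\|_{*}+\|A(G)\|_{*}\right)=\mathcal{E}(G),
$$
which is the desired bound.

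I expect the main obstacle to be definitional rather than computational: one must insist that $F$ be the full crossing set $\partial(S)$, since for an arbitrary edge deletion $G\setminus F$ need not split as a disjoint union and the block-diagonal structure is lost (indeed, deleting a single edge is not energy-monotone in general). Granting this, the only analytic input is unitary invariance of the trace norm together with the triangle inequality, both standard. As an alternative that avoids quoting pinching, one may use the dual characterization $\|M\|_{*}=\max_{\|X\|_{\mathrm{op}}\le 1}\langle X,M\rangle$: taking an optimal block-diagonal $X$ for $A(G\setminus F)$ and observing that its pairing with $A(G)$ ignores the cut blocks gives $\langle X,A(G)\rangle=\langle X,A(G\setminus F)\rangle=\mathcal{E}(G\setminus F)$, so that $\mathcal{E}(G)\ge\mathcal{E}(G\setminus F)$ follows at once.
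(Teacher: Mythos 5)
Your proof is correct. Note that the paper gives no proof of this lemma at all---it is imported verbatim as Theorem 4.20 of \cite{b14}---and your pinching argument, writing $A(G\setminus F)=\tfrac12\left(A(G)+UA(G)U\right)$ with $U$ the block sign involution and then using unitary invariance plus the triangle inequality for the trace norm, is essentially the standard proof found in that reference, where the triangle-inequality step is phrased as Ky Fan's theorem $\mathcal{E}(A+B)\le\mathcal{E}(A)+\mathcal{E}(B)$ for symmetric matrices; your insistence that $F$ be a full crossing set $\partial(S)$ (so that deletion kills exactly the off-diagonal blocks) is precisely the hypothesis that makes the argument work.
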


\subsection{Energy of a vertex}

The energy of the vertex $v_{i} \in V(G)$, which is denoted by $\mathcal E_{G}(v_{i})$, was defined in \cite{b8}, as follows: $\mathcal E_{G}(v_{i}) = |A(G)|_{ii}$, for $i = 1, 2, \dots,n$. Equivalently, 

\begin{equation}\label{e1}
   \mathcal{E}_G\left(v_i\right)=\sum_{j=1}^n p_{i j}\left|\lambda_j\right|, \quad i=1, \ldots, n, 
\end{equation}
where $\lambda_j$ denotes the $j$-eigenvalue of the adjacency matrix of $G$ and the weights $p_{i j}$ satisfy
$$
\sum_{i=1}^n p_{i j}=1 \text { and } \sum_{j=1}^n p_{i j}=1 .
$$
More precisely, $p_{ij}=u_{ij}^2$ where  $U = (u_{ij} )^n_{i,j=1}$ is the  orthogonal matrix whose columns are given by the eigenvectors of $A(G)$. On the other hands,
\begin{equation}\label{moments}
M_k(G, i)=\sum_{j=1}^n p_{i j} \lambda_j^k, \quad i=1, \ldots, n,
\end{equation}
where $M_k(G, i)$ is equal to the number of $v_i-v_i$ walks in $G$ of length $k$.

By definition and guided by the results obtained for $\mathcal{E}(G)$, in \cite{b10},  a Coulson-type integral formula to calculate the energy of a vertex was found and as a consequence, it allowed to elucidate the structural interpretation of the energy of a vertex and its relationship with the energy of a bipartite graph, since, it provided a method to compare the energy of two vertices in such graphs. 
\begin{lem}\label{p9} (Theorem 4, \cite{b10})
Let $G$ be a graph, then for a vertex $v_i$ in $G$,
$$
\mathcal{E}_G\left(v_i\right)=\frac{1}{\pi} \int_{\mathbb{R}} 1-\frac{\mathbf{i} x \phi\left(G-v_i ; \mathbf{i} x\right)}{\phi(G ; \mathbf{i} x)} d x.
$$
\end{lem}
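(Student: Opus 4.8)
The plan is to start from the spectral definition $\mathcal{E}_G(v_i)=\sum_{j=1}^n p_{ij}|\lambda_j|$ given in \eqref{e1}, and convert the absolute values into an integral by means of the elementary identity
\begin{equation*}
|\lambda|=\frac{1}{\pi}\int_{\mathbb{R}}\frac{\lambda^2}{x^2+\lambda^2}\,dx
=\frac{1}{\pi}\int_{\mathbb{R}}\left(1-\frac{x^2}{x^2+\lambda^2}\right)dx,
\end{equation*}
valid for every real $\lambda$ (including $\lambda=0$). Summing over $j$ and using that $\sum_{j=1}^n p_{ij}=1$, the constant terms combine into a single $1$, so that after interchanging the finite sum with the integral one obtains
\begin{equation*}
\pi\,\mathcal{E}_G(v_i)=\int_{\mathbb{R}}\left(1-x^2\sum_{j=1}^n\frac{p_{ij}}{x^2+\lambda_j^2}\right)dx.
\end{equation*}
The interchange is harmless because there are finitely many eigenvalues and each individual integrand $\frac{\lambda_j^2}{x^2+\lambda_j^2}$ is integrable, decaying like $O(1/x^2)$.

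The heart of the argument is to recognize the rational function $\sum_j p_{ij}/(z-\lambda_j)$ as a diagonal entry of the resolvent. Writing $p_{ij}=u_{ij}^2$ as in the statement, the spectral decomposition of $A=A(G)$ gives
\begin{equation*}
\big[(zI-A)^{-1}\big]_{ii}=\sum_{j=1}^n\frac{u_{ij}^2}{z-\lambda_j}=\sum_{j=1}^n\frac{p_{ij}}{z-\lambda_j}.
\end{equation*}
On the other hand, by the cofactor formula for the inverse, the same diagonal entry equals the ratio of the $(i,i)$-minor of $zI-A$ to its determinant; since deleting the $i$-th row and column of $zI-A$ yields $zI-A(G-v_i)$, this minor is exactly $\phi(G-v_i;z)$. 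Hence I would record the key identity
\begin{equation*}
\frac{\phi(G-v_i;z)}{\phi(G;z)}=\sum_{j=1}^n\frac{p_{ij}}{z-\lambda_j},
\end{equation*}
which connects the combinatorial side (characteristic polynomials of $G$ and of $G-v_i$) with the spectral weights $p_{ij}$.

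To finish, I would evaluate this identity at $z=\mathbf{i}x$ and multiply by $\mathbf{i}x$. Using $\frac{\mathbf{i}x}{\mathbf{i}x-\lambda_j}=\frac{x^2-\mathbf{i}x\lambda_j}{x^2+\lambda_j^2}$, one splits into real and imaginary parts:
\begin{equation*}
\frac{\mathbf{i}x\,\phi(G-v_i;\mathbf{i}x)}{\phi(G;\mathbf{i}x)}
=x^2\sum_{j=1}^n\frac{p_{ij}}{x^2+\lambda_j^2}
-\mathbf{i}x\sum_{j=1}^n\frac{p_{ij}\lambda_j}{x^2+\lambda_j^2}.
\end{equation*}
The real part is precisely the quantity appearing in the reduced integral above, while the imaginary part is an odd function of $x$ and therefore integrates to zero over $\mathbb{R}$. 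Substituting back gives
\begin{equation*}
\mathcal{E}_G(v_i)=\frac{1}{\pi}\int_{\mathbb{R}}\left(1-\frac{\mathbf{i}x\,\phi(G-v_i;\mathbf{i}x)}{\phi(G;\mathbf{i}x)}\right)dx,
\end{equation*}
which is the claimed formula. The main obstacle I anticipate is twofold: first, establishing the resolvent/cofactor identity cleanly (this is standard but must be justified for all $z$ off the spectrum, and one should note that $\phi(G;\mathbf{i}x)$ has no real zeros so the integrand is well defined); and second, handling convergence, since the real part is absolutely integrable (it decays like $O(1/x^2)$) whereas the imaginary part is only conditionally convergent and must be read as a principal value, vanishing there by oddness.
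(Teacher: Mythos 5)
Your proof is correct, but note that there is no internal proof to compare it against: the paper states this lemma only as a quoted result (Theorem 4 of \cite{b10}) and never proves it. Judged on its own, your argument --- the spectral definition \eqref{e1}, the elementary kernel $|\lambda|=\frac{1}{\pi}\int_{\mathbb{R}}\frac{\lambda^2}{x^2+\lambda^2}\,dx$, the resolvent/cofactor identity $\frac{\phi(G-v_i;z)}{\phi(G;z)}=\sum_{j}\frac{p_{ij}}{z-\lambda_j}$, evaluation at $z=\mathbf{i}x$, and the vanishing of the odd imaginary part --- is the standard and natural route to this formula, and every step is sound: the exchange of the finite sum with the integral is justified, the cofactor computation correctly identifies the $(i,i)$ minor of $zI-A$ with $\phi(G-v_i;z)$, and the algebra $\frac{\mathbf{i}x}{\mathbf{i}x-\lambda_j}=\frac{x^2-\mathbf{i}x\lambda_j}{x^2+\lambda_j^2}$ is right.

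Two small corrections to your closing remarks. First, your claim that $\phi(G;\mathbf{i}x)$ has no real zeros is not quite true: $\phi(G;\mathbf{i}\cdot 0)=\det(-A(G))=\pm\det A(G)$, which vanishes whenever $A(G)$ is singular (e.g.\ for stars or $P_3$). The correct statement is that $\phi(G;\mathbf{i}x)=\prod_j(\mathbf{i}x-\lambda_j)\neq 0$ for every real $x\neq 0$, because the $\lambda_j$ are real; the single point $x=0$ is immaterial for the integral. Second, your worry about conditional convergence of the imaginary part is unnecessary: since $\sum_j p_{ij}\lambda_j=M_1(G,i)=[A(G)]_{ii}=0$ for a simple graph, one has $x\sum_j\frac{p_{ij}\lambda_j}{x^2+\lambda_j^2}=x\sum_j p_{ij}\lambda_j\bigl(\frac{1}{x^2+\lambda_j^2}-\frac{1}{x^2}\bigr)=-\sum_j\frac{p_{ij}\lambda_j^3}{x\,(x^2+\lambda_j^2)}$, which decays like $O(|x|^{-3})$; hence the full integrand is absolutely integrable and no principal-value interpretation is needed, although oddness disposes of that term in either reading.
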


\begin{lem}\label{p7} (Lemma 6, \cite{b10})
 Let $G$ a bipartite graph and $v,w \in G$. If $G \setminus w \succeq G \setminus v$, then $\mathcal{E}_{G}(w) \leq \mathcal{E}_{G}(v)$.  Moreover if $G  \setminus w \neq G \setminus v$ then $\mathcal{E}_{G}(w) < \mathcal{E}_{G}(v)$.
\end{lem}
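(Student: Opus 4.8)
The plan is to apply the Coulson-type integral formula of Lemma \ref{p9} to both vertices and subtract, so that the difference $\mathcal{E}_G(w)-\mathcal{E}_G(v)$ is written as a single integral whose integrand I can show is sign-definite. Writing the formula for $w$ and for $v$, the constant terms cancel and the common denominator $\phi(G;\mathbf{i}x)$ remains, giving
$$\mathcal{E}_G(w) - \mathcal{E}_G(v) = \frac{1}{\pi}\int_{\mathbb{R}} \frac{\mathbf{i}x\bigl[\phi(G-v;\mathbf{i}x) - \phi(G-w;\mathbf{i}x)\bigr]}{\phi(G;\mathbf{i}x)}\,dx.$$

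First I would exploit bipartiteness to make this integrand real. Since $G$ is bipartite on, say, $n$ vertices, its characteristic polynomial has the form $\phi_G(x)=\sum_k(-1)^k b_{2k}(G)x^{n-2k}$, and the substitution $x=\mathbf{i}x$ collapses the alternating signs: $\phi(G;\mathbf{i}x)=\mathbf{i}^n\,p_G(x)$, where $p_G(x):=\sum_k b_{2k}(G)x^{n-2k}$ is a real polynomial with non-negative coefficients. The vertex-deleted graphs $G-v$ and $G-w$ are again bipartite, now on $n-1$ vertices, so likewise $\phi(G-v;\mathbf{i}x)=\mathbf{i}^{n-1}p_{G-v}(x)$ and $\phi(G-w;\mathbf{i}x)=\mathbf{i}^{n-1}p_{G-w}(x)$. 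Plugging these in, every power of $\mathbf{i}$ cancels and the difference becomes the manifestly real integral
$$\mathcal{E}_G(w) - \mathcal{E}_G(v) = \frac{1}{\pi}\int_{\mathbb{R}} \frac{x\bigl[p_{G-v}(x) - p_{G-w}(x)\bigr]}{p_G(x)}\,dx.$$

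Next I would read off the sign. Because the exponents in $p_G$ all share the parity of $n$ while those in $p_{G-v},p_{G-w}$ share the parity of $n-1$, a short parity check shows the integrand is an even function of $x$, so it suffices to control it on $(0,\infty)$. There $p_G(x)>0$, since its leading coefficient is $1$ and the rest are non-negative. The hypothesis $G\setminus w\succeq G\setminus v$ says exactly that $b_{2k}(G-w)\geq b_{2k}(G-v)$ for every $k$, so $p_{G-v}(x)-p_{G-w}(x)=\sum_k\bigl(b_{2k}(G-v)-b_{2k}(G-w)\bigr)x^{n-1-2k}$ has all coefficients $\leq 0$ and is therefore $\leq 0$ for $x>0$. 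Hence the whole integrand is $\leq 0$ on $(0,\infty)$, and by evenness on all of $\mathbb{R}$, which yields $\mathcal{E}_G(w)\leq\mathcal{E}_G(v)$. For the strict statement, if $G\setminus w\neq G\setminus v$ then at least one inequality $b_{2k}(G-w)>b_{2k}(G-v)$ is strict, so some coefficient of $p_{G-v}-p_{G-w}$ is strictly negative; the integrand is then strictly negative on a subinterval of $(0,\infty)$ and the integral is strictly negative, giving $\mathcal{E}_G(w)<\mathcal{E}_G(v)$.

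The one delicate point, and the main obstacle, is the bookkeeping with the powers of $\mathbf{i}$: confirming that $x\mapsto\mathbf{i}x$ turns numerator and denominator into real quantities with matching $\mathbf{i}^n$ factors, leaving a real and even integrand. Once this is done the sign analysis is immediate. A secondary technical issue is the behavior at $x=0$ when $n$ is odd, where $p_G(0)=0$; but the numerator carries a compensating factor of $x$ (indeed $p_G(x)=x\,q(x)$ with $q(0)=b_{n-1}(G)$), so the integrand extends continuously across the origin and the integral, whose convergence is guaranteed by Lemma \ref{p9}, has no genuine singularity.
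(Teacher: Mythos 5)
Your proof is correct. There is, however, nothing in the paper to compare it against: the statement is imported verbatim as Lemma 6 of \cite{b10}, and the present paper gives no proof of it. Your argument is essentially the standard one and is exactly the mechanism this paper itself uses later, in the proof of Theorem \ref{t3}: apply the Coulson-type formula of Lemma \ref{p9}, use bipartiteness to replace each $\phi(\cdot\,;\mathbf{i}x)$ by a real polynomial $\sum_k b_{2k}(\cdot)\,x^{n-2k}$ with nonnegative coefficients, check that the powers of $\mathbf{i}$ cancel, and read off the sign of the integrand from coefficientwise monotonicity. Your bookkeeping checks out: $\phi(G;\mathbf{i}x)=\mathbf{i}^n p_G(x)$, the integrand of the difference is an even function, $p_G>0$ on $(0,\infty)$, the hypothesis $G\setminus w\succeq G\setminus v$ makes the numerator nonpositive there, and the possible zero of $p_G$ at the origin (odd $n$) is harmless because both original integrals converge by Lemma \ref{p9}, so their difference may be written as a single convergent integral. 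One caveat concerns the ``moreover'' clause: your inference that $G\setminus w\neq G\setminus v$ forces some strict inequality $b_{2k}(G\setminus w)>b_{2k}(G\setminus v)$ is valid only if ``$\neq$'' is read in the quasi-order sense, i.e., as $G\setminus w\succ G\setminus v$. Read literally as non-equality of graphs it fails: non-isomorphic bipartite graphs can share all coefficients $b_{2k}$, and in that case the Coulson formula gives $\mathcal{E}_G(w)=\mathcal{E}_G(v)$, so the strict conclusion would be false. The quasi-order reading is clearly the intended one (it is how the source, Lemma 6 of \cite{b10}, states the hypothesis), so your proof stands, but you should make that reading explicit.
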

The following result which is a consequence of Lemma \ref{p7} will be often used in our proofs below. 
\begin{lem}\label{p2} (Theorem 7, \cite{b10})
 Let $G_1$ and $G_2$ two disjoint bipartite graphs with $w \in G_2$ and $v \in G_1$. If $G_1 \cup\left(G_2 \setminus w\right) \succeq G_2 \cup\left(G_1 \setminus v\right)$, then $\mathcal{E}_{G_2}(w) \leq \mathcal{E}_{G_1}(v)$. Moreover if $G_2\setminus w \neq G_1 \setminus v$ then $\mathcal{E}_G(w)<\mathcal{E}_G(v)$.

\end{lem}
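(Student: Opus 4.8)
The plan is to deduce Lemma~\ref{p2} from its single-graph predecessor Lemma~\ref{p7} by passing to the disjoint union $G := G_1 \cup G_2$. A disjoint union of bipartite graphs is bipartite, so $G$ is bipartite, and both $v$ and $w$ are vertices of $G$, with $v \in G_1$ and $w \in G_2$. The argument then rests on two facts: that the relevant vertex energies are unchanged by this embedding, and that the hypothesis of Lemma~\ref{p2} is exactly the hypothesis of Lemma~\ref{p7} for the pair $(w,v)$ inside $G$.

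First I would record the invariance of vertex energy under adjoining a disjoint component, namely $\mathcal{E}_G(v) = \mathcal{E}_{G_1}(v)$ and $\mathcal{E}_G(w) = \mathcal{E}_{G_2}(w)$. This follows from the Coulson-type formula of Lemma~\ref{p9} together with the multiplicativity of the characteristic polynomial over disjoint unions. Indeed $\phi(G;x) = \phi(G_1;x)\,\phi(G_2;x)$, and since deleting $v \in G_1$ leaves the $G_2$ block untouched, $\phi(G \setminus v; x) = \phi(G_1 \setminus v; x)\,\phi(G_2;x)$. Substituting these into the integrand of Lemma~\ref{p9}, the common factor $\phi(G_2;\mathbf{i}x)$ cancels, so that $\tfrac{\mathbf{i}x\,\phi(G\setminus v;\mathbf{i}x)}{\phi(G;\mathbf{i}x)} = \tfrac{\mathbf{i}x\,\phi(G_1\setminus v;\mathbf{i}x)}{\phi(G_1;\mathbf{i}x)}$, and the integral computing $\mathcal{E}_G(v)$ reduces to the one computing $\mathcal{E}_{G_1}(v)$. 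The identical computation with $w \in G_2$ gives $\mathcal{E}_G(w) = \mathcal{E}_{G_2}(w)$.

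Next I would translate the hypothesis. Because $w \in G_2$ and $v \in G_1$, deleting either vertex from $G$ affects only its own component, so $G \setminus w = G_1 \cup (G_2 \setminus w)$ and $G \setminus v = G_2 \cup (G_1 \setminus v)$. Thus the assumed relation $G_1 \cup (G_2 \setminus w) \succeq G_2 \cup (G_1 \setminus v)$ is literally $G \setminus w \succeq G \setminus v$. Applying Lemma~\ref{p7} to the bipartite graph $G$ with the vertices $w$ and $v$ now yields $\mathcal{E}_G(w) \leq \mathcal{E}_G(v)$, which by the invariance established above is exactly $\mathcal{E}_{G_2}(w) \leq \mathcal{E}_{G_1}(v)$. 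For the strict statement, the condition $G_2 \setminus w \neq G_1 \setminus v$ forces $G \setminus w \neq G \setminus v$, so the strict clause of Lemma~\ref{p7} upgrades the conclusion to $\mathcal{E}_{G_2}(w) < \mathcal{E}_{G_1}(v)$.

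The main obstacle is the invariance step of the second paragraph: one must check that cancelling $\phi(G_2;\mathbf{i}x)$ yields a genuine identity of rational functions, so the two integrands coincide except at the finitely many real $x$ where a denominator vanishes, a null set that does not affect the integral. The remaining steps are essentially bookkeeping; the only other point needing a line of justification is the translation of the vertex deletions and of the strictness condition through the disjoint-union decomposition.
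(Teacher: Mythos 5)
Your reduction to Lemma \ref{p7} via the disjoint union $G=G_1\cup G_2$ is the right idea, and it is exactly how this result is meant to follow from Lemma \ref{p7} (the paper gives no proof of its own, citing \cite{b10} and remarking that the statement is a consequence of Lemma \ref{p7}). The first half of your argument is correct: $G$ is bipartite, $G\setminus w=G_1\cup(G_2\setminus w)$ and $G\setminus v=G_2\cup(G_1\setminus v)$, so the hypothesis is literally $G\setminus w\succeq G\setminus v$, and vertex energy is unchanged by adjoining disjoint components. Your Coulson-formula cancellation is valid; even more directly, $A(G)$ is block diagonal, hence so is $|A(G)|$, so $|A(G)|_{vv}=|A(G_1)|_{vv}$ with no integral needed.

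The strictness step, however, contains a genuine gap: the implication ``$G_2\setminus w\neq G_1\setminus v$ forces $G\setminus w\neq G\setminus v$'' is false. Take $G_1=K_2$ with $v$ one of its vertices, and $G_2=K_2\cup K_2$ with $w$ any of its vertices; then $G\setminus w$ and $G\setminus v$ are both isomorphic to $K_2\cup K_2\cup K_1$ (in particular they have identical coefficients $b_{2k}$), while $G_2\setminus w=K_2\cup K_1\neq K_1=G_1\setminus v$. This example in fact shows that the strictness clause is false as printed in the paper: all its hypotheses hold, yet $\mathcal{E}_{G_2}(w)=1=\mathcal{E}_{G_1}(v)$. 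The condition that actually transfers through your reduction is $G_1\cup(G_2\setminus w)\neq G_2\cup(G_1\setminus v)$ (equivalently, given $\succeq$, the strict relation $G_1\cup(G_2\setminus w)\succ G_2\cup(G_1\setminus v)$), which is precisely $G\setminus w\neq G\setminus v$ and hence triggers the strict clause of Lemma \ref{p7}. Note that this corrected condition is also what the paper actually verifies when it invokes the lemma in the proof of Theorem \ref{t1}, where the strict relation $\prec$ between the two unions is established. So your proof of the weak inequality stands, but the strict part cannot be established by the implication you assert; the honest course is to flag the hypothesis as mis-stated and prove the corrected version, which your framework then handles in one line.
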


The remaining two lemmas will also be used in what follows.

\begin{lem} \label {p5} (Proposition 3.9, \cite{b9})
Let $G$ be a bipartite graph with parts $V_{1}$ and $V_{2}$. Then $$\displaystyle\sum_{v \in V_{1}} \mathcal{E}_G(w)=\displaystyle\sum_{v \in V_{2}} \mathcal{E}_G(w).$$
\end{lem}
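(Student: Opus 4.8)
The plan is to exploit the block structure that the adjacency matrix of a bipartite graph inherits from its bipartition, together with the fact that the vertex energy $\mathcal{E}_G(w)=|A(G)|_{ww}$ is read directly off the diagonal of $|A(G)|$. First I would order the vertices so that those of $V_1$ come first and those of $V_2$ second. With this ordering the adjacency matrix is block off-diagonal,
$$
A(G)=\begin{pmatrix} 0 & B \\ B^{\top} & 0 \end{pmatrix},
$$
where $B$ is the $|V_1|\times|V_2|$ biadjacency matrix. Since $A(G)$ is real symmetric we have $|A(G)|=(A(G)^2)^{1/2}$, and a direct computation gives the block-diagonal forms
$$
A(G)^2=\begin{pmatrix} BB^{\top} & 0 \\ 0 & B^{\top}B \end{pmatrix},
\qquad
|A(G)|=\begin{pmatrix} (BB^{\top})^{1/2} & 0 \\ 0 & (B^{\top}B)^{1/2} \end{pmatrix}.
$$

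Next I would read off the two sums from the diagonal blocks. Summing $\mathcal{E}_G(w)=|A(G)|_{ww}$ over each part collapses to a trace,
$$
\sum_{w\in V_1}\mathcal{E}_G(w)=\mathrm{Tr}\bigl((BB^{\top})^{1/2}\bigr),
\qquad
\sum_{w\in V_2}\mathcal{E}_G(w)=\mathrm{Tr}\bigl((B^{\top}B)^{1/2}\bigr).
$$
Both traces equal the sum of the singular values of $B$, so the two quantities coincide and the lemma follows. The one genuine point is the classical fact that $BB^{\top}$ and $B^{\top}B$ share the same nonzero eigenvalues with the same multiplicities; the zero eigenvalues, which differ in number when $B$ is not square, contribute $0$ to the trace of the square root and are therefore harmless. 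I expect this identification of the common nonzero spectrum — equivalently, the coincidence of the two traces with the nuclear norm of $B$ — to be the crux of the argument; everything else is bookkeeping with the block decomposition.

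As an alternative route that avoids singular values, one can argue straight from the weighted formula \eqref{e1}. For a bipartite graph the spectrum is symmetric about $0$, and for each nonzero eigenvalue $\lambda_j$ the corresponding normalized eigenvector $(x_j,y_j)$ (split according to $V_1,V_2$) satisfies $\|x_j\|^2=\|y_j\|^2=\tfrac12$; this is seen from $By_j=\lambda_j x_j$ and $B^{\top}x_j=\lambda_j y_j$, which give $\|x_j\|^2=\lambda_j^{-1}\langle By_j,x_j\rangle=\lambda_j^{-1}\langle y_j,\lambda_j y_j\rangle=\|y_j\|^2$. Writing $\sum_{w\in V_1}p_{wj}=\|x_j\|^2$ and $\sum_{w\in V_2}p_{wj}=\|y_j\|^2$, and noting that eigenvalues $\lambda_j=0$ drop out because they are weighted by $|\lambda_j|$, one gets
$$
\sum_{w\in V_1}\mathcal{E}_G(w)=\sum_{j}|\lambda_j|\,\|x_j\|^2
=\tfrac12\!\!\sum_{\lambda_j\neq 0}\!\!|\lambda_j|
=\sum_{j}|\lambda_j|\,\|y_j\|^2=\sum_{w\in V_2}\mathcal{E}_G(w),
$$
which is the claimed identity. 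Here the main obstacle is precisely establishing the equal splitting $\|x_j\|^2=\|y_j\|^2$ for nonzero eigenvalues, after which the conclusion is immediate.
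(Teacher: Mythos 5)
The paper does not prove this lemma; it is quoted verbatim from reference \cite{b9} (Proposition 3.9 there), so there is no in-paper argument to compare against. Your proof is correct and complete on its own terms. The block computation $|A(G)|=\operatorname{diag}\bigl((BB^{\top})^{1/2},(B^{\top}B)^{1/2}\bigr)$ is valid because the positive semidefinite square root of a block-diagonal PSD matrix is block-diagonal, and the identity $\mathrm{Tr}\bigl((BB^{\top})^{1/2}\bigr)=\mathrm{Tr}\bigl((B^{\top}B)^{1/2}\bigr)$ is exactly the statement that both traces equal the nuclear norm of $B$; this is the standard route and almost certainly the one taken in \cite{b9}. Your second argument via the equal mass $\|x_j\|^2=\|y_j\|^2$ of eigenvector components on the two parts is also sound (it holds for each eigenvector of a nonzero eigenvalue individually, even in the degenerate case, and the zero eigenvalues are killed by the weight $|\lambda_j|$), and it has the small added virtue of connecting directly to the weighted formula \eqref{e1} used elsewhere in the paper. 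Either route suffices; no gaps.
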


\begin{lem} \label {p10} (Theorem 3, \cite{b6}) Let $v_i$ and $v_j$ be connected vertices of a simple (undirected) graph G. Then
$\mathcal{E}\left(v_i\right) \mathcal{E}\left(v_j\right) \geq 1$.
\end{lem}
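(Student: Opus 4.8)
The plan is to exploit the fact that each vertex energy is a diagonal entry of the positive semidefinite matrix $|A|=(A^2)^{1/2}$, and to connect the \emph{off}-diagonal (adjacency) structure of $A$ to these diagonal entries through the polar (sign) decomposition of $A$ together with a Cauchy--Schwarz estimate.

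First I would record the spectral objects. Writing $A=A(G)=U\Lambda U^{T}$ with $U$ orthogonal and $\Lambda=\mathrm{diag}(\lambda_1,\dots,\lambda_n)$, set $|A|=U|\Lambda|U^{T}$ and $S=U\,\mathrm{sgn}(\Lambda)\,U^{T}$, where $\mathrm{sgn}(\Lambda)$ has diagonal entries $\mathrm{sgn}(\lambda_k)\in\{-1,0,1\}$. Then $S$ is symmetric, $S$ and $|A|$ commute (both are functions of $A$), and $A=S|A|$ since $\mathrm{sgn}(\lambda_k)|\lambda_k|=\lambda_k$. Moreover $S^{2}=P$, the orthogonal projection onto the range of $A$, and because $|\lambda_k|\,\mathbf{1}[\lambda_k\neq 0]=|\lambda_k|$ one obtains the key identity $S|A|S=S^{2}|A|=P|A|=|A|$. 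By definition $\mathcal{E}_G(v_i)=|A|_{ii}=\langle e_i,|A|e_i\rangle$, where $e_1,\dots,e_n$ is the standard basis.

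Next I would introduce the positive semidefinite bilinear form $B(x,y)=\langle x,|A|y\rangle$, so that $\mathcal{E}_G(v_i)=B(e_i,e_i)$. The crucial observation, which is where the adjacency hypothesis will enter, is that for any $i,j$ one has $A_{ij}=\langle e_i,Ae_j\rangle=\langle e_i,S|A|e_j\rangle=\langle Se_i,|A|e_j\rangle=B(Se_i,e_j)$, using $S=S^{T}$. Applying the Cauchy--Schwarz inequality for the positive semidefinite form $B$ gives
$$A_{ij}^{2}=B(Se_i,e_j)^{2}\le B(Se_i,Se_i)\,B(e_j,e_j).$$
Here $B(e_j,e_j)=\mathcal{E}_G(v_j)$, while $B(Se_i,Se_i)=\langle e_i,S|A|Se_i\rangle=\langle e_i,|A|e_i\rangle=\mathcal{E}_G(v_i)$ by the identity $S|A|S=|A|$ above. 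Hence $A_{ij}^{2}\le\mathcal{E}_G(v_i)\,\mathcal{E}_G(v_j)$ for every pair $i,j$. If $v_i$ and $v_j$ are adjacent, then $A_{ij}=1$, and the inequality becomes $\mathcal{E}_G(v_i)\,\mathcal{E}_G(v_j)\ge 1$, as claimed.

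I expect the main conceptual obstacle to be the choice of the right quantity to bound. The naive route through the $2\times2$ principal minors of the positive semidefinite matrix $|A|$ only yields $\mathcal{E}_G(v_i)\,\mathcal{E}_G(v_j)\ge|A|_{ij}^{2}$, and $|A|_{ij}$ can vanish even for adjacent vertices (for instance $|A(K_2)|=I$ gives $|A|_{12}=0$), so this bound is too weak. The polar decomposition $A=S|A|$ is exactly what replaces the useless $|A|_{ij}$ by the adjacency entry $A_{ij}$. The only technical point requiring care is the possible singularity of $A$, but this is handled cleanly by the identity $S|A|S=P|A|=|A|$, which holds precisely because $|A|$ annihilates the kernel of $A$; thus no invertibility hypothesis is needed. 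The bound is sharp, as the single edge $K_2$ shows.
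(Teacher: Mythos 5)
Your proof is correct. Note that the paper does not prove this lemma at all --- it is imported verbatim as Theorem 3 of \cite{b6} --- and the proof given there is precisely the coordinate form of your argument: writing $1 = A_{ij} = \sum_k \lambda_k u_{ik}u_{jk}$ and applying Cauchy--Schwarz with the weights $|\lambda_k|$ split as $\mathrm{sgn}(\lambda_k)\sqrt{|\lambda_k|}\cdot\sqrt{|\lambda_k|}$; your polar decomposition $A = S|A|$ together with the identity $S|A|S = |A|$ is the coordinate-free packaging of exactly that computation, so the two approaches are essentially the same.
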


\section{Main Results}\label{s2}

In this section we prove the main theorem. To prepare for the main part of the proof, we need some notation.

\begin{notn}
Let $T$ be a tree and consider a given path $P=\{v_1\sim v_2 \sim \cdots \sim v_n\}$ in $T$. If $n \geq 2$, then for $1\leq i\leq n-1$, let $e_i=v_iv_{i+1}$.

\begin{enumerate}

\item For $n=1$, $\tilde T$ is the empty graph. For $n \geq 2$, $\tilde T$ denotes the connected component of $T\setminus v_1$ which contains $v_2$. 
\item $A_i$ denotes the connected component of $T\setminus e_i$ which contains $v_i$.
\item For 
$i=1$, $\tilde A_i$ is the empty graph. For $i\geq 2$ (which implies that $n \geq 3$), $\tilde A_i$ denotes the connected component of $A_i\setminus v_1$ that contains $v_{2}$. 
\end{enumerate}
\end{notn}

The following simple lemma is crucial for the proof of our main theorem. 

\begin{lem}\label{L1}
Let $T$ be a tree and $P=\{v_1 \sim v_2 \sim \cdots \sim v_n\}$ be a path in $T$ with $n\geq 2$. Then for $1\leq i \leq n-1$. 
The following order is satisfied in the quasi-order $\preceq$:
\begin{itemize}
    \item If $i$ is even, $T\cup\tilde A_i \prec \tilde T\cup A_i.$
\item If $i$ is odd,  $T\cup\tilde A_i \succ \tilde T\cup A_i$.
\end{itemize}

\end{lem}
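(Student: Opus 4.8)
The plan is to reduce everything to matchings and then extract the alternating sign from a single short recursion. Since $T$ is a tree, every graph occurring in the statement is a forest, so Lemma \ref{l4.4} applies with an empty cycle sum and gives the deletion rule $\phi_{G}=\phi_{G\setminus e}-\phi_{G\setminus\{u,v\}}$ for an edge $e=uv$; equivalently, for a forest $F$ the coefficient $b_{2k}(F)$ counts the $k$-matchings of $F$. Writing $M(G,x)=\sum_{k}b_{2k}(G)\,x^{k}$ for the (nonnegative) matching generating polynomial, disjoint unions multiply, $M(G_1\cup G_2,x)=M(G_1,x)M(G_2,x)$, and a direct vertex count shows that $T\cup\tilde A_i$ and $\tilde T\cup A_i$ have the same order. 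Hence the asserted quasi-order comparison is equivalent to a statement about the signs of the coefficients of the polynomial
\[
\Delta_i:=M(T,x)\,M(\tilde A_i,x)-M(\tilde T,x)\,M(A_i,x).
\]
Concretely, the lemma says $\Delta_i$ is coefficientwise nonnegative and nonzero when $i$ is odd, and coefficientwise nonpositive and nonzero when $i$ is even.

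First I would fix the decomposition along the path. Let $C_i$ be the component of $T\setminus e_i$ containing $v_{i+1}$, let $D_i$ be the union of the branches of $A_i$ attached at $v_i$ other than the edge toward $v_{i-1}$, and set $R_i=\{v_i\}\cup D_i$. The tree structure then yields the identities $T=A_i+_{e_i}C_i$ and $\tilde T=\tilde A_i+_{e_i}C_i$ (meaning $T$, resp. $\tilde T$, is obtained from the disjoint union by adding the edge $e_i=v_iv_{i+1}$ joining $v_i$ to $v_{i+1}$), together with $A_i\setminus v_i=A_{i-1}\cup D_i$, $\tilde A_i\setminus v_i=\tilde A_{i-1}\cup D_i$, $A_i=A_{i-1}+_{e_{i-1}}R_i$ and $\tilde A_i=\tilde A_{i-1}+_{e_{i-1}}R_i$. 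Establishing these identities, with due attention to the degenerate cases, is the bookkeeping core of the argument.

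Next I would compute. Applying the deletion rule at $e_i$ to both $M(T,x)$ and $M(\tilde T,x)$ and substituting into $\Delta_i$, the two ``undeleted'' terms cancel and leave $\Delta_i=x\,M(C_i\setminus v_{i+1},x)\,M(D_i,x)\,Q_i$, where $Q_i:=M(A_{i-1},x)M(\tilde A_i,x)-M(\tilde A_{i-1},x)M(A_i,x)$; since the prefactor has nonnegative coefficients, $\Delta_i$ shares the sign pattern of $Q_i$. I would then note that $M(A_i,x)$ and $M(\tilde A_i,x)$ satisfy one and the same three-term recursion in $i$, namely $M(\cdot_i,x)=M(R_i,x)M(\cdot_{i-1},x)+x\,M(D_i,x)M(D_{i-1},x)M(\cdot_{i-2},x)$, obtained by deleting $e_{i-1}$ and using $A_{i-1}\setminus v_{i-1}=A_{i-2}\cup D_{i-1}$. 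Consequently the associated discrete ``Casoratian'' $Q_i$ telescopes, $Q_i=-x\,M(D_i,x)M(D_{i-1},x)\,Q_{i-1}$, and a direct computation supplies the base data $Q_2=-x\,M(A_1\setminus v_1,x)M(D_2,x)$ (coefficientwise nonpositive and nonzero) and, handling $i=1$ separately via $\tilde T=C_1$, $\Delta_1=x\,M(A_1\setminus v_1,x)M(C_1\setminus v_2,x)$ (coefficientwise nonnegative and nonzero).

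The conclusion is then immediate: each recursive step multiplies $Q_i$ by $-x$ times matching polynomials, all of which have nonnegative coefficients and constant term $1$ (hence never vanish), so the lone factor $-x$ flips the sign at every step. Thus $Q_i$, and with it $\Delta_i$, is coefficientwise of sign $(-1)^{i-1}$ and nonzero, which is exactly the asserted $\succ$ for odd $i$ and $\prec$ for even $i$. I expect the only genuine obstacle to be the structural bookkeeping of the second step, together with the degenerate cases (notably $i=1$ with $\tilde A_1$ the empty graph, and possibly empty $D_i$ or $C_i\setminus v_{i+1}$) and the nonvanishing needed for strictness; once the recursion $Q_i=-x\,M(D_i,x)M(D_{i-1},x)Q_{i-1}$ is in hand, the alternation is automatic.
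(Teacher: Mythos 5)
Your proof is correct, and it checks out in the degenerate cases (empty $\tilde A_1$, empty $D_i$ or $C_i\setminus v_{i+1}$, all of which contribute a factor $1$). It runs on the same engine as the paper's proof --- the edge-deletion recursion of Lemma \ref{l4.4}, which for forests reads $b_{2k}(G)=b_{2k}(G\setminus e)+b_{2(k-1)}(G\setminus\{u,v\})$, combined with an induction on $i$ in which each step contributes exactly one extra factor of $x$ and hence one sign flip --- but the decomposition is genuinely different. The paper deletes $e_1$ in both $T$ and $A_i$: this replaces the ambient tree $T$ by $\tilde T$ and the path by its tail starting at $v_2$, and the two $b_{2(k-1)}$ remainders are precisely the two sides of the statement for the pair $(\tilde T,\,i-1)$, so the induction shrinks the tree from the $v_1$ end. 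You instead keep $T$ fixed and cut at the far end: deleting $e_i$ reduces the comparison to the Casoratian $Q_i=M(A_{i-1},x)M(\tilde A_i,x)-M(\tilde A_{i-1},x)M(A_i,x)$ of your matching generating polynomial $M(G,x)=\sum_k b_{2k}(G)x^k$, and deleting $e_{i-1}$ telescopes it to $Q_i=-x\,M(D_i,x)M(D_{i-1},x)Q_{i-1}$. What your version buys is a single explicit polynomial identity from which the alternation is manifest --- $\Delta_i$ equals $(-1)^{i-1}x^{i}$ times a coefficientwise nonnegative polynomial with constant term $1$, so strictness comes for free from the lowest-order term rather than having to be tracked through the induction. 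The cost is the structural bookkeeping around $C_i$, $D_i$, $R_i$ and the identities $A_i\setminus v_i=A_{i-1}\cup D_i$, $\tilde A_i\setminus v_i=\tilde A_{i-1}\cup D_i$, $T=A_i+_{e_i}C_i$, $\tilde T=\tilde A_i+_{e_i}C_i$; these all hold because $T$ is a tree (and are exactly where Remark \ref{oc}'s edge-cut hypothesis would enter for general bipartite graphs), but they do need to be stated and verified, whereas the paper's route needs essentially no auxiliary notation beyond relabelling the shorter path inside $\tilde T$.
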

\begin{proof}
We will get the proof via mathematical induction on $i$. Our base case will be, $i=1,2$. First, for $i=1$, by Lemma \ref{l4.4}  applied to  $T$ with $e_{1}$, we can observe that
$$b_{2k}(T\cup \tilde A_1)=b_{2k}(T)=b_{2k}(\tilde T \cup A_{1})+ b_{2(k-1)}(\widetilde {\textit{\textbf{T}}}\cup (A_2\setminus\{v_{1},v_{2}\})),$$
where $\textit{\textbf{T}}=\tilde T$ and $\textit{\textbf{P}}=\{u_{1}=v_{2}\sim u_{2}=v_{3}\sim \dots \sim u_{n-1}=v_{n}\}$; which completes this part. For $i=2,$ by Lemma \ref{l4.4}  applied to $A_2$ and $T$ with $e_{1}$, respectively, we obtain 
\begin{center}
 $b_{2k}(\tilde T\cup A_2)=b_{2k}(\tilde T \cup \tilde A_{2} \cup A_{1})+b_{2(k-1)}(\textit{\textbf{T}} \cup (A_2\setminus\{v_{1},v_{2}\}))$,
\end{center}
and
\begin{center}
 $b_{2k}(T\cup\tilde A_2)=b_{2k}(\tilde T \cup \tilde A_{2} \cup A_{1})
+b_{2(k-1)}(\widetilde {\textit{\textbf{T}}}\cup {\textit{\textbf{A}}}_{1}\cup (A_2\setminus\{v_{1},v_{2}\}))$.
\end{center}
Then, if now with $\textit{\textbf{T}}$ we apply the same argument used when $i=1$, we obtain the conclusion. In the following, we will show the statement for $i$. By using Lemma \ref{l4.4} for $A_{i}$ and $T$ with $e_{1}$, the following equalities hold

\begin{center}
    $b_{2k}(\tilde T \cup A_i)=b_{2k}(\tilde T \cup \tilde A_{i}\cup A_{1})+b_{2(k-1)}(\textit{\textbf{T}}\cup \widetilde {\textit{\textbf{A}}}_{i-1} \cup A_{2}\setminus \{v_{1},v_{2}\}$),
\end{center}
and
\begin{center}
    $b_{2k}( T\cup \tilde A_i)=b_{2k}(\tilde T \cup \tilde A_{i}\cup A_{1})+b_{2(k-1)}(\widetilde {\textit{\textbf{T}}} \cup {\textit{\textbf{A}}}_{i-1} \cup A_{2}\setminus \{v_{1},v_{2}\})$.
\end{center}
 Finally, applying the induction hypothesis for $\textit{\textbf{T}}\cup \widetilde {\textit{\textbf{A}}}_{i-1}$ and $\widetilde {\textit{\textbf{T}}} \cup {\textit{\textbf{A}}}_{i-1} $ the proof is complete.

\end{proof}
\begin{rem}\label{oc}
 The result seen above is also valid for bipartite graphs when the path $P$ is formed solely by edge cuts.
\end{rem}
Now we can prove the main theorem of the paper that we state again for the convenience of the reader.

\begin{thm}\label{t1}
Let $T$ be a tree and $B$ be a bipartite graph.  Consider the coalescence of $T\circ B$ where  we have identified the vertices $v$ in $T$ and $u$ in $B$ where $deg_{B}(u)\geq 1$.
Let $w$ be a vertex in $T$ and $\hat{w}$ its corresponding copy in $T\circ B$ then

\begin{itemize}
    \item If $d_{T}(v,w)$ is odd, $\mathcal{E}_T(w) >\mathcal{E}_{T\circ B}(\hat{w}).$
    \item If $d_{T}(v,w)$ if is even, $\mathcal{E}_T(w) < \mathcal{E}_{T\circ B}(\hat{w})$.
\end{itemize}

\end{thm}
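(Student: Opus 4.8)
The plan is to convert the two vertex-energy comparisons into a single quasi-order comparison between two auxiliary bipartite graphs, and then to extract the parity of $d:=d_T(v,w)$ from a characteristic-polynomial computation. First I observe that both $T$ and $T\circ B$ are bipartite (a tree has no cycles, and every cycle of $T\circ B$ lies entirely inside the block $B$, hence is even), so the bipartite machinery applies. Passing to disjoint copies, I want to compare $\mathcal{E}_T(w)$ with $\mathcal{E}_{T\circ B}(\hat w)$ via Lemma \ref{p2} applied to the pair $(T,T\circ B)$ with marked vertices $w$ and $\hat w$. The sign is governed by the quasi-order between
$$G_1:=T\cup\big((T\circ B)\setminus\hat w\big)\qquad\text{and}\qquad G_2:=(T\circ B)\cup(T\setminus w),$$
since Lemma \ref{p2} turns $G_1\succ G_2$ into $\mathcal{E}_{T\circ B}(\hat w)<\mathcal{E}_T(w)$ and $G_1\prec G_2$ into the reverse inequality. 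As $G_1$ and $G_2$ have equally many vertices, comparing them in $\preceq$ amounts to comparing $\phi_{G_1}$ and $\phi_{G_2}$ coefficient by coefficient.

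The heart of the argument is to compute $\phi_{G_1}-\phi_{G_2}$. Since $w\neq v$ the deletion commutes with the coalescence, $(T\circ B)\setminus\hat w=(T\setminus w)\circ B$, so applying the coalescence identity $\phi_{T\circ B}=\phi_T\,\phi_{B\setminus u}+\phi_{T\setminus v}\,\phi_B-x\,\phi_{T\setminus v}\,\phi_{B\setminus u}$ once with the forest $T\setminus w$ and once with $T$, and subtracting, the two $\phi_T\,\phi_{T\setminus w}\,\phi_{B\setminus u}$ terms cancel and everything factors as
$$\phi_{G_1}-\phi_{G_2}=\big(\phi_B-x\,\phi_{B\setminus u}\big)\cdot\big(\phi_T\,\phi_{T\setminus\{v,w\}}-\phi_{T\setminus v}\,\phi_{T\setminus w}\big).$$
The second factor is purely tree-theoretic. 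By the Jacobi/Dodgson determinantal identity for $xI-A(T)$, together with the fact that for a tree the $(v,w)$-cofactor of $xI-A(T)$ equals $\pm\phi_{T\setminus P}$ (with $P$ the unique $v$–$w$ path and $T\setminus P$ the forest of branches hanging off it), this factor equals $-\big(\phi_{T\setminus P}\big)^2$. Its key feature is the degree $2\big(|V(T)|-d-1\big)$: this shift by $-2d$ is what injects the parity of $d$. I would note that the sign of this path ``Wronskian'' is exactly the tree-level input that Lemma \ref{L1} and Remark \ref{oc} are engineered to supply, so one may quote them instead of the determinantal identity.

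It then remains to read off the sign pattern. Writing bipartite characteristic polynomials as $\sum_k(-1)^k b_{2k}x^{\,n-2k}$ with $b_{2k}\ge0$, the first factor becomes
$$-\big(\phi_B-x\,\phi_{B\setminus u}\big)=\sum_k(-1)^{k+1}\big(b_{2k}(B)-b_{2k}(B\setminus u)\big)x^{\,|V(B)|-2k},$$
and the monotonicity $b_{2k}(B)\ge b_{2k}(B\setminus u)$ (that is, $B\setminus u\preceq B$, since deleting a vertex only destroys Sachs subgraphs) makes every coefficient of the form $(-1)^{k+1}\cdot(\text{nonnegative})$; the $k=1$ coefficient is $\deg_B(u)\ge 1$, so the factor is nonzero. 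Multiplying this clean alternating pattern by $-\big(\phi_{T\setminus P}\big)^2$ (also sign-alternating, of degree $2(|V(T)|-d-1)$) and collecting the coefficient of $x^{\,n-2m}$, every such coefficient comes out equal to $(-1)^{d+1}(-1)^m\cdot(\text{nonnegative})$. Hence $b_{2m}(G_1)-b_{2m}(G_2)$ has the single global sign $(-1)^{d+1}$ for all $m$: for $d$ odd one gets $G_1\succ G_2$, and for $d$ even $G_1\prec G_2$, strictness coming from the $\deg_B(u)\ge1$ term (contributing at $m=d+1$). Feeding this into Lemma \ref{p2} yields precisely the two cases, and the degenerate case $w=v$ (distance $0$, even) is the same computation with $P=\{v\}$.

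I expect the main obstacle to be the sign bookkeeping of the last step: one must guarantee that the alternating signs of the two bipartite factors combine with the degree shift $-2d$ into a \emph{single} global sign across all coefficients, rather than a mixed pattern incomparable in $\preceq$. The two ingredients that force this — the determinantal identity making the tree factor (minus) a perfect square, and the vertex-deletion monotonicity $B\setminus u\preceq B$ making the $B$-factor strictly sign-alternating — are where I would concentrate the verification.
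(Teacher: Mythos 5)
Your proposal is correct, but after the common first step it follows a genuinely different route from the paper's. Both arguments reduce the theorem via Lemma \ref{p2} to comparing $G_1=T\cup\big((T\circ B)\setminus\hat w\big)$ and $G_2=(T\circ B)\cup(T\setminus w)$ in the quasi-order $\preceq$, and both strip off $B$ using Schwenk's coalescence formula together with $B\setminus u\prec B$. From there the paper decomposes $T$ along the $v$--$w$ path into the pieces $A_i,\tilde A_i$ and finishes with the inductive Lemma \ref{L1}, whereas you compute $\phi_{G_1}-\phi_{G_2}$ in closed form: the factorization $\big(\phi_B-x\phi_{B\setminus u}\big)\big(\phi_T\phi_{T\setminus\{v,w\}}-\phi_{T\setminus v}\phi_{T\setminus w}\big)$ is correct, the Desnanot--Jacobi identity plus the classical tree fact that the $(v,w)$-cofactor of $xI-A(T)$ is $\pm\phi_{T\setminus P}$ turn the second factor into $-\left(\phi_{T\setminus P}\right)^2$, and the resulting degree shift by $2d$ forces the single global sign $(-1)^{d+1}$ on all differences $b_{2m}(G_1)-b_{2m}(G_2)$ (your index bookkeeping $m=d+l$, the strictness from $b_2(B)-b_2(B\setminus u)=\deg_B(u)\ge1$, and the degenerate case $w=v$ all check out). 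Your approach buys a closed-form identity that eliminates the induction of Lemma \ref{L1}, treats $w=v$ uniformly, and makes the parity mechanism completely transparent, at the cost of importing two external classical facts; the paper's route is more elementary (only the edge recursion of Lemma \ref{l4.4}) and yields Lemma \ref{L1} as a reusable intermediate, which it later extends to bipartite graphs in Remark \ref{oc}. One small caveat: your parenthetical justification of $B\setminus u\preceq B$ (``deleting a vertex only destroys Sachs subgraphs'') is too quick, since Sachs coefficients carry signs; this monotonicity is nonetheless a standard fact for bipartite graphs and is invoked without proof at exactly the same point in the paper's own argument, so it is not a gap relative to the paper.
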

\begin{proof}

By Lemma \ref{p2} it is enough to prove that 
\begin{center}
    $(T\circ B )\cup(T \setminus w)  \prec T \cup((T\circ B)\setminus \hat{w})$,
\end{center}
when $d_{T}(v,w)$ is odd and,

\begin{center}
    $T \cup((T\circ B)\setminus \hat{w}) \prec  (T\circ B) \cup(T \setminus w)$,
\end{center}
when $d_{T}(v,w)$ is even. Firstly, for $d_{T}(v,w) \geq 1$, let $P=\{v_{1}=v\sim v_{2}\sim \dots \sim v_{d(v,w)+1}=w\}$ be the path in $T$ connecting the vertices $v$ and $w$ and note that 

\begin{center}
  $T \cup((T\circ B)\setminus \hat{w})=T \cup (A_{d(v,w)}\circ B) \cup Q,$  
\end{center}
and 
\begin{center}
  $(T\circ B) \cup(T\setminus w)=(T \circ B) \cup A_{d(v,w)} \cup Q,$  
\end{center}
where $Q$ is the disjoint union of the connected components of $T \setminus v_{d(v,w)+1}$ that do not contain $ v_{d(v,w)}$ and $A_{d(v,w)}\circ B$ is  the coalescence with respect
to $v$ and $u$. Now, by computing the characteristic polynomial of $A_{d(v,w)} \circ B$ and $T \circ B$, respectively, we have that
\\
\\
$b_{2k}(T \cup (A_{d(v,w)}\circ B))=$
$$b_{2k}(T \cup A_{d(v,w)}\cup B \setminus u)+b_{2k}(T \cup A_{d(v,w)} \setminus v \cup B))-b_{2k}(T \cup A_{d(v,w)} \setminus v \cup B \setminus u )),$$
and
\\
\\
$b_{2k}((T \circ B) \cup A_{d(v,w)}))=$\\
$$b_{2k}(T \cup A_{d(v,w)} \cup B \setminus u)+b_{2k}(T \setminus v  \cup A_{d(v,w)} \cup B))-b_{2k}(T \setminus v \cup A_{d(v,w)}  \cup B \setminus u )).$$
\\
Since,  the first term is the same in both expressions and $B \setminus u  \prec B$, all boil down to compare 
$b_{2k}(T \cup A_{d(v,w)} \setminus v )$ and $b_{2k}(T \setminus v  \cup A_{d(v,w)})$. Rewriting the last two expressions in the following way
$$b_{2k}(T \cup A_{d(v,w)} \setminus v )=b_{2k}(T \cup \tilde A_{d(v,w)} \cup A_{1} \setminus v ),$$
and
$$b_{2k}(T \setminus v  \cup A_{d(v,w)})=b_{2k}(\tilde T \cup A_{d(v,w)} \cup A_{1} \setminus v ).$$
By Lemma \ref{L1}, this part is completed. Finally, if $w=v$ then 
\begin{center}
  $T \cup((T\circ B)\setminus \hat{v})=T \cup B\setminus \hat{v}\cup T \setminus v,$  
\end{center}
and 
\begin{center}
  $(T\circ B) \cup(T\setminus v)=(T \circ B) \cup T \setminus v.$  
\end{center}

Hence,  the statement is verified,  if in a similar way we calculate the characteristic polynomial of $T \circ B$.
\end{proof}



 The central problem of cooperative games is to examine how the benefits obtained with the coalition will be distributed among all players. Now, since given a graph, we have that $\mathcal E_{G}(G)=\sum_{i=1}^{n}\mathcal E_{G}(v_{i})$. Then, it is natural to think this as a partitioning problem. With this approach, in \cite{b11}, the authors define the energy game as follows: Let $G=(V, E)$ be a simple undirected graph, for each subset $S \subseteq V$, they define the
characteristic function as $w(S):=$ $\mathcal{E}(I(S))$ where $I(S)$ is the graph induced by $S$. Furthermore, they show that the vertex energy is a payoff vector that belongs to the core.
Thus, one potential theoretical explanation for this behavior may be derived through the application of a Game Theory perspective: Consider the path $P=\{v_1 \sim v_2\cdots \sim v_n\}$ in $T$, where we merge a bipartite graph $B$ in $v_1$. First, the player who is used for the merger, increases its options to establish alliances. It is expected then to increase its valuation as well, since, it will have a larger negotiating margin. On the other hand $v_{2}$ now has more competition, that is, the new neighbours of $v_{1}$ hence lower negotiating margin leading to a decrease in its payoff. By applying inductively the same reasoning on the remaining players in the path, we should obtain the alternating behavior observed in Theorem \ref{t1} (Figure \ref{f5}).
 \begin{figure}[ht]
\begin{center}
\usetikzlibrary{arrows}
\pagestyle{empty}

\definecolor{ffdazd}{rgb}{0,0,0}
\begin{tikzpicture}[line cap=round,line join=round,>=triangle 45,x=1cm,y=1cm]
\clip(-4.186359329241893,-0.8610910899166204) rectangle (6.607662709172672,2.7313465854071053);
\draw [line width=0.8pt] (0,0)-- (1,0);
\draw [line width=0.8pt] (1,0)-- (2,0);
\draw [line width=0.8pt] (2,0)-- (3,0);
\draw [line width=0.8pt] (4,0)-- (5,0);
\draw [rotate around={0:(3,0)},line width=0.8pt] (3,0) ellipse (3.073142765785426cm and 0.6664881536075677cm);
\draw [rotate around={0:(-2,0)},line width=0.8pt] (-2,0) ellipse (2.098207090202712cm and 0.6344075924647653cm);
\draw (-0.2,-0.1723340735442095) node[anchor=north west] {$\hat{v}$};
\draw (4.912869614726483,0.09) node[anchor=north west] {$\hat{w}$};
\draw (-0.5,1) node[anchor=north west] {$B \circ T$};
\draw (3.1,0.14479145917402286) node[anchor=north west] {$\dots$};

\draw (5.2,0.5) node[anchor=north west] {$\uparrow$};
\draw (4.040791614671647,0.6) node[anchor=north west] {$\uparrow$};
\draw (4.2,0.6) node[anchor=north west] {$\downarrow$};
\draw (5.044504029829099,0.5) node[anchor=north west] {$\downarrow$};
\begin{scriptsize}
\draw [fill=ffdazd] (0,0) circle (4pt);
\draw (0.04,0.38) node[anchor=north west] {$\uparrow$};
\draw [fill=ffdazd] (1,0) circle (4pt);
\draw (1.02,0.38) node[anchor=north west] {$\downarrow$};
\draw [fill=ffdazd] (2,0) circle (4pt);
\draw (2.02,0.38) node[anchor=north west] {$\uparrow$};
\draw [fill=ffdazd] (3,0) circle (4pt);
\draw (3.02,0.38) node[anchor=north west] {$\downarrow$};
\draw [fill=ffdazd] (4,0) circle (4pt);
\draw [fill=ffdazd] (5,0) circle (4pt);

\end{scriptsize}
\end{tikzpicture}
\caption{Graphical representation of the behavior observed in the Theorem \ref{t1}.} 
    \label{f5}
 \end{center}

\end{figure}
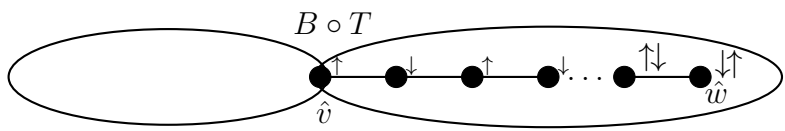 

\begin{rem}\label{oc1}
    According to Remark \ref{oc} and the proof seen above, the result shown in Theorem \ref{t1}  is also valid  for $B_{1} \circ B_{2}$ respect to $u_{i} \in V(B_{i})$ with $B_{i}$ bipartite graphs if we take either $w=u_{i}$ or any $w \in V(B_{i}) \setminus u_{i}$ connected to $u_{i}$ by means of a path formed by edge cuts (which tells us that the path is unique). Moreover, the energy of vertices in $B_{i}$ that are not connected with $u_{i}$ is the same after coalescence.
\end{rem}

Next, by directly applying the Theorem \ref{t1} and Remark \ref{oc1} we can obtain the following corollary; which gives us information about the behavior of vertex energy when we perform the opposite operation, that is, when we remove edges or vertices, and as expected it shows a contrary behavior. 
\begin{cor}\label{c0}
Let $T$ be a tree and $B$ be a bipartite graph.  Consider the coalescence  $T\circ B$ where  we have identified the vertices $v$ in $T$ and $u$ in $B$, respectively. Given $v_{1}v_{2} \in E(T)$, we will denote by $T_{v_i}$
the connected component of $T \setminus v_1v_2$ containing $v_i$. Then taken a vertex  $w$ in $T_{v_{i}}$ we have that:

\begin{itemize}
    \item If $d(w,v_{i})$ is odd, $\mathcal{E}_{(T \circ B)\setminus v_{2}v_{1} }(\hat{w}) >\mathcal{E}_{T \circ B}(w).$
    \item If $d(w,v_{i})$ is even, $\mathcal{E}_{(T \circ B)\setminus v_{2}v_{1}}(\hat{w}) < \mathcal{E}_{T \circ B}(w)$.
\end{itemize}
\end{cor}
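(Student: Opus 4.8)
The plan is to read the edge deletion as the inverse of a coalescence and then quote Theorem~\ref{t1} and Remark~\ref{oc1} verbatim. First I would observe that $v_1v_2$ is a bridge of $T\circ B$: because $B$ meets $T$ only in the single vertex $v$, no cycle of $T\circ B$ can traverse a tree edge, so in the language of Lemma~\ref{l4.4} the family $\mathscr{C}(v_1v_2)$ is empty and the deletion disconnects the graph. Accordingly I would write $(T\circ B)\setminus v_1v_2=G\sqcup G'$, where $G$ is the component carrying $w$ (hence also its endpoint $v_i$) and $G'$ is the component carrying the other endpoint, which I call $v_j$. Both pieces are bipartite, being subgraphs of the bipartite graph $T\circ B$, and since the energy of a vertex depends only on its own connected component we immediately get $\mathcal{E}_{(T\circ B)\setminus v_1v_2}(\hat w)=\mathcal{E}_{G}(w)$.

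The crux is to present $T\circ B$ itself as a coalescence resting on $G$. Let $B^{\ast}$ be the graph obtained from $G'$ by attaching one new pendant vertex $p$ to $v_j$, so that $B^{\ast}$ is again bipartite and $\deg_{B^{\ast}}(p)=1$. Then $T\circ B$ is exactly the coalescence $G\circ B^{\ast}$ that identifies $v_i\in G$ with $p\in B^{\ast}$: the merged vertex retains every neighbour of $v_i$ in $G$ and picks up the one extra neighbour $v_j$, which is precisely the lost edge $v_1v_2$, with $G'$ dangling from $v_j$. This pendant device is the only genuinely non-routine step, since a coalescence fuses two vertices whereas we must instead re-attach two \emph{distinct} vertices by an edge; introducing $p$ is what turns the bridge re-insertion into an honest coalescence to which the previous results apply.

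Finally I would run the alternating comparison. The unique $w$--$v_i$ path lies inside the tree part $T_{v_i}\subseteq G$, so it is made of edge cuts and has length $d(w,v_i)$; thus Remark~\ref{oc1} applies to $G\circ B^{\ast}$ with distinguished vertices $v_i$ and $p$ (and in the case $v\notin G$, where $G=T_{v_i}$ is an actual tree, this is literally Theorem~\ref{t1}). It yields $\mathcal{E}_{G}(w)>\mathcal{E}_{G\circ B^{\ast}}(w)$ when $d(w,v_i)$ is odd and $\mathcal{E}_{G}(w)<\mathcal{E}_{G\circ B^{\ast}}(w)$ when it is even, strictly because $\deg_{B^{\ast}}(p)\ge 1$ and $G'$ is nonempty. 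Substituting $\mathcal{E}_{G\circ B^{\ast}}(w)=\mathcal{E}_{T\circ B}(w)$ and $\mathcal{E}_{G}(w)=\mathcal{E}_{(T\circ B)\setminus v_1v_2}(\hat w)$ reproduces the two displayed inequalities. The point I would be most careful about is the orientation of the inequalities: although deletion is the reverse of attaching a graph, here $G$ plays the role of the \emph{pre-coalescence} graph, so Theorem~\ref{t1}'s directions transfer unchanged rather than flipping, which is exactly the form in which the corollary is stated.
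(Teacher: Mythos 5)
Your proof is correct and follows the route the paper intends: the paper gives no explicit argument for Corollary~\ref{c0}, asserting only that it follows ``directly'' from Theorem~\ref{t1} and Remark~\ref{oc1}, and your pendant-vertex construction (realizing $T\circ B$ as $G\circ B^{\ast}$ with $p$ attached to $v_j$, so that re-inserting the bridge becomes an honest coalescence with $\deg_{B^{\ast}}(p)=1$) is exactly the detail needed to make that direct application rigorous, including the case $v\in T_{v_i}$ where $G=T_{v_i}\circ B$ is not a tree and Remark~\ref{oc1} rather than Theorem~\ref{t1} must be invoked along the edge-cut path from $w$ to $v_i$.
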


It is important to highlight that in the previous corollary we covered all the vertices in $T$, since, $T=T_{v_{1}} \cup T_{v_{2}}$. In particular, when $B$ is the graph formed by an isolated vertex, it shows us what is the behavior in the trees after removing an edge; and as consequence, after eliminating  vertices.

The preceding theorems compare the energies before and after coalescence. Another pertinent issue regarding the merging vertex is to compare the energy of the vertex after coalescence with their individual energies before coalescence.  This is exactly what the following theorem addresses.

\begin{thm}\label{t3}
Given $G$ and $H$ two bipartite graphs with disjoint vertex sets. Let $u \in G$, $v \in H$ and $w$ its copy in $G \circ H$, then

$$\mathcal{E}_{G\circ H}(w) \leq \mathcal{E}_{G}(u)  + \mathcal{E}_{ H}(v) .$$

Moreover, equality is attained if and only if either $u$ is an isolated vertex of $G$ or $v$ is an isolated
vertex of $H$ or both.

\end{thm}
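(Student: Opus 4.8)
The plan is to reduce the inequality $\mathcal{E}_{G\circ H}(w) \leq \mathcal{E}_G(u) + \mathcal{E}_H(v)$ to a comparison of characteristic polynomials via the Coulson-type vertex formula of Lemma \ref{p9}, and then to exploit bipartiteness so that every such comparison becomes a statement about the coefficients $b_{2k}$ and the quasi-order $\preceq$. First I would write down the Coulson integral for each of the three vertex energies involved. Since $G$, $H$, and $G\circ H$ are all bipartite (coalescence of bipartite graphs along a vertex is bipartite), the integrands are real and the relevant polynomials $\phi_G$, $\phi_H$, $\phi_{G\circ H}$ and their vertex-deleted versions all have the alternating sign pattern $\phi(x)=\sum_k (-1)^k b_{2k}x^{n-2k}$. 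The strategy is then to show that the integrand for $\mathcal{E}_{G\circ H}(w)$ is pointwise dominated, after the substitution $x\mapsto \mathbf{i}x$, by the sum of the integrands for $\mathcal{E}_G(u)$ and $\mathcal{E}_H(v)$.

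Next I would use the coalescence identity for characteristic polynomials stated in the preliminaries, namely
\[
\phi_{G\circ H}(x)=\phi_G(x)\,\phi_{H\setminus v}(x)+\phi_{G\setminus u}(x)\,\phi_H(x)-x\,\phi_{G\setminus u}(x)\,\phi_{H\setminus v}(x),
\]
together with the observation that $(G\circ H)\setminus w = (G\setminus u)\cup(H\setminus v)$, so that $\phi_{(G\circ H)\setminus w}(x)=\phi_{G\setminus u}(x)\,\phi_{H\setminus v}(x)$. Substituting these into the Coulson formula of Lemma \ref{p9} for $\mathcal{E}_{G\circ H}(w)$, the integrand becomes an explicit rational function in $\phi_G,\phi_H,\phi_{G\setminus u},\phi_{H\setminus v}$ evaluated at $\mathbf{i}x$. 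The goal is to compare this with the sum of the two integrands $1-\frac{\mathbf{i}x\,\phi_{G\setminus u}(\mathbf{i}x)}{\phi_G(\mathbf{i}x)}$ and $1-\frac{\mathbf{i}x\,\phi_{H\setminus v}(\mathbf{i}x)}{\phi_H(\mathbf{i}x)}$, which together give $\mathcal{E}_G(u)+\mathcal{E}_H(v)$. After clearing denominators (all of which are positive on the imaginary axis for bipartite graphs, since $\phi_G(\mathbf{i}x)$ has constant sign), the desired inequality should collapse to a nonnegativity statement for a single combination of the $b_{2k}$ coefficients, which I would verify using the $\preceq$ order and the monotonicity $G\setminus u \preceq G$, $H\setminus v \preceq H$.

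For the equality characterization I would track exactly where strictness can be lost. Each $b_{2k}$ inequality coming from $G\setminus u \preceq G$ is strict unless deleting $u$ removes no even closed walks through $u$ of every length, which happens precisely when $u$ is isolated; likewise for $v$. So equality throughout the integral forces one of $u$, $v$ to be isolated, and conversely if $u$ is isolated then $G\circ H$ is just $H$ with an extra isolated vertex attached at $v$, so $w$ inherits the energy $\mathcal{E}_H(v)$ while $\mathcal{E}_G(u)=0$, giving equality (symmetrically for $v$ isolated, and trivially if both). The main obstacle I anticipate is the algebraic bookkeeping in the clearing-denominators step: verifying that the cross term $-\mathbf{i}x\,\phi_{G\setminus u}\phi_{H\setminus v}$ in the coalescence identity produces exactly the correction that turns the sum of the two separate integrands into an upper bound rather than an equality, and confirming the sign is the right way. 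I expect this to follow from the positivity of the constituent polynomials on the imaginary axis, but pinning down that the leftover term is a nonnegative combination of $b_{2k}$'s — rather than merely plausible — is the step that will require the most care, and it is where Lemma \ref{p7} and the quasi-order comparisons will do the real work.
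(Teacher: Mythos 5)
Your outline follows the paper's proof almost exactly: apply Lemma \ref{p9}, use the factorization $\phi_{(G\circ H)\setminus w}=\phi_{G\setminus u}\,\phi_{H\setminus v}$ together with the coalescence identity to split the integrand into an ``$H$-piece'' and a ``$G$-piece'', and bound each piece by $\mathcal{E}_H(v)$ and $\mathcal{E}_G(u)$ via a coefficientwise comparison of the $b_{2k}$. The step you leave open, however, is the crux, and the tool you name for it is the wrong one: Lemma \ref{p7} compares two vertices of a single graph and plays no role here. Once all polynomials at $\mathbf{i}x$ are written as series $\sum_k b_{2k}(\cdot)x^{-2k}$ with nonnegative coefficients, each of the two comparisons is equivalent to the single quasi-order statement $G\circ H\preceq G\cup H$, that is, $b_{2k}(G\circ H)\le\sum_j b_{2j}(G)\,b_{2(k-j)}(H)$ for all $k$; this is precisely the inequality the paper invokes (without proof) as $b_{2k}(G\cup H)\ge b_{2k}(G\circ H)$. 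Your other proposed ingredient does close this gap, but only if you combine it with the coalescence identity at the level of coefficients:
\[
b_{2k}(G\cup H)-b_{2k}(G\circ H)=\sum_{j}\bigl(b_{2j}(G)-b_{2j}(G\setminus u)\bigr)\bigl(b_{2(k-j)}(H)-b_{2(k-j)}(H\setminus v)\bigr),
\]
where each factor is nonnegative by $G\setminus u\preceq G$ and $H\setminus v\preceq H$. You need to carry out this computation explicitly; as written, your ``clearing denominators'' paragraph only expresses the hope that the leftover term is a nonnegative combination of coefficients. The same identity also repairs your equality argument, which currently targets the wrong inequality (strictness of $G\setminus u\prec G$ is not what is needed): the displayed right-hand side vanishes for every $k$ if and only if one of the two difference sequences is identically zero, and since the $j=1$ entry of the $G$-sequence is $\deg_G(u)$ (and $\deg_H(v)$ for $H$), this happens exactly when $u$ or $v$ is isolated. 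Together with the fact that $\sum_k b_{2k}(H\setminus v)x^{-2k}\ge 1$ in the integrand, this gives the full equality characterization --- a point on which the paper's own proof is silent --- so with the missing computation supplied your argument would actually be slightly more complete than the one in the paper.
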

\begin{proof}
By Lemma \ref{p9}, we have

   \begin{align*}
\mathcal{E}_{G \circ H}(v) &= \frac{1}{\pi} \int_{\mathbb{R}} \left(1 - \frac{i x \phi(G \circ H - v, i x)}{\phi(G \circ H, i x)}\right) d x \\
&= \frac{1}{\pi} \int_{\mathbb{R}} \left(1 - \frac{i x \phi(G - v,ix) \phi(H - v;ix)}{\phi(G \circ H ; i x)}\right) d x \\
&= \frac{1}{\pi} \int_{\mathbb{R}} \left(1 + \frac{\phi(G \circ H, i x)}{\phi(G \circ H, i x)} - \frac{\phi(G;ix) \phi(H - v;ix)}{\phi(G \circ H;ix)} - \frac{\phi(H;ix) \phi(G - v)}{\phi(G \circ H;ix)}\right) d x \\
&= \frac{1}{\pi} \int_{\mathbb{R}} \left(1 - \frac{\phi(G;ix) \phi(H - v;ix)}{\phi(G \circ H;ix)} \right) d x + \frac{1}{\pi} \int_{\mathbb{R}} \left(1 - \frac{\phi(H;ix) \phi(G - v;ix)}{\phi(G \circ H ;ix)}\right) d x.
\end{align*}

For the first integral notice that,
\begin{align*}
\frac{\phi(G;ix) \phi(H-v;ix)}{\phi(G \circ H;ix)} &= \frac{(i x)^{n+m-1} \sum_{k \geq 0}(-1)^k b_{2 k}(G)(i x)^{-2 k} \sum_{k \geq 0}(-1)^k b_{2 k}(H-v)(i x)^{-2 k}}{(i x)^{n+m-1} \sum_{k\geq 0}(-1)^k b_{2 k}(G \circ H)(i x)^{-2 k}} \\
&= \frac{\sum_{k \geq 0} b_{2 k}(G) x^{-2 k} \sum_{k \geq 0} b_{2 k}(H-v) x^{-2 k}}{\sum_{k \geq 0} b_{2 k}(G \circ H) x^{-2 k}},
\end{align*}
where $|V(G)|=n$ and $|V(H)|=m$. Besides,
$$
\frac{\phi(H-v;ix) i x}{\phi(H;ix)}=\frac{\sum_{k\geq 0} b_{2 k}(H-v) x^{-2 k}}{\sum_{k\geq 0} b_{2 k}(H) x^{-2 x} }, 
$$
and
$$
\left(\sum_{k \geq 0} b_{2 k}(H) x^{-2 k}\right)\left(\sum_{k\geq 0} b_{2 k}(G) x^{-2 k}\right)=\sum_{k \geq 0} b_{2 k}(G \cup H) x^{-2 k}. $$
Since $b_{2 k}(G \cup H) \geq b_{2 k}(G \circ H) \geq 0$, it is verified that
$$
\frac{\sum_{k>0} b_{2 k}(G) x^{-2 k} \sum_{k>0} b_{2 k}(H-v) x^{-2 k}}{\sum_{k>0} b_{2 k}(G \circ H) x^{-2 k}} \geq \frac{\sum_{k>0} b_{2 k}(H-v) x^{-2 k}}{\sum_{k>0} b_{2 k}(H) x^{-2 k}}.
$$
Thus, \begin{equation}\label{ eq:join2}\frac{1}{\pi} \int_{\mathbb{R}} \left(1 - \frac{\phi(G) \phi(H - v)}{\phi(G \circ H)}\right) d x  \leq \frac{1}{\pi} \int_{\mathbb{R}} \left(1 - \frac{\phi(H - v)}{\phi( H)}\right) d x =\mathcal{E}_H(v)
\end{equation}
By conducting a similar process with the other summand we obtain
\begin{equation}\label{ eq:join3}\frac{1}{\pi} \int_{\mathbb{R}} \left(1 - \frac{\phi(H) \phi(G - v)}{\phi(G \circ H)}\right) d x \leq \frac{1}{\pi} \int_{\mathbb{R}} \left(1 - \frac{\phi(H - v)}{\phi( H)}\right) d x =\mathcal{E}_G(v)\end{equation}
which completes the proof.
\end{proof}

\begin{rem}
    Note that the above result is a local version of Lemma \ref{p4} and when is contrasted with Theorem \ref{t1} and Remark \ref{oc1} tells us how much the energy of the vertex (where  coalescence is made) can increase. 
\end{rem}
\subsection{ Successive coalescence}
Now, we consider the effect of connecting successively a sequence of bipartite graphs. Later we will study the limiting behavior.

\begin{cor}\label{c1}
Let $T$ be a tree, $\{B_{n}\}_{n\geq 0}$ a sequence of bipartite graphs, $v \in V(T)$ and $\{b_{n}\}_{n\geq 0}$ a sequence of vertices such that $b_{n} \in V(B_{n})$. Now, we will construct a sequence of graphs $\{G_{n}\}_{n \geq 0}$ using the following recursive method.
\begin{itemize}
    \item For $n=1$, by identifying the vertex $v$ in $T$ and the vertex $b_{1}$ in $B_{1}$, we define $G_{1}=T\circ B_{1}$ and let $v_{1}$ be the copy of $v$ in $G_{1}$.
      \item For $n \geq 2$, let  $v_{n-1}$ be the copy of $v$ in $G_{n-1}$, we define $G_{n}=G_{n-1} \circ B_{n}$ where we have identified the vertices $v_{n-1}$ and $b_{n}$, respectively.
\end{itemize}  
Let $w$ be a vertex in $T$ and $\hat{w}$ its corresponding copy in $G_{n}$ then the following results hold:

\begin{itemize}
    \item If $d(v,w)$ is even, then $\{\mathcal{E}_{G_{n}}(\hat{w})\}_{n\geq 0}$ is an increasing sequence and $\mathcal{E}_{ T\setminus v}(w)$ is a upper bound except when $d(v,w)=0$.
    \item If $d(v,w)$  is odd, then $\{\mathcal{E}_{G_{n}}(\hat{w})\}_{n\geq 0}$ is an decreasing sequence and $\mathcal{E}_{T\setminus v}(w)$ is a lower bound.
\end{itemize}
\end{cor}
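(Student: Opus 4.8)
The plan is to prove the two assertions — monotonicity of $\{\mathcal{E}_{G_n}(\hat w)\}$ and the claimed bound — separately. For monotonicity I would first note that every $G_n$ is bipartite: a tree is bipartite, each $B_n$ is bipartite, and a coalescence of two bipartite graphs is bipartite because the identified vertex is a cut vertex, so every cycle lies entirely inside one factor. Since $G_n = G_{n-1}\circ B_n$ is a coalescence at the copy $v_{n-1}$ of $v$, and in $G_{n-1}$ the vertex $\hat w$ is joined to $v_{n-1}$ by the image of the unique $v$–$w$ path of $T$ (every edge of which is an edge cut of $G_{n-1}$, because the bipartite blocks are attached only at the copy of $v$) with $d_{G_{n-1}}(v_{n-1},\hat w)=d_T(v,w)$, Theorem \ref{t1} together with Remark \ref{oc1} applies to the step $G_{n-1}\to G_n$. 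This gives $\mathcal{E}_{G_n}(\hat w)<\mathcal{E}_{G_{n-1}}(\hat w)$ when $d(v,w)$ is odd and the reverse when $d(v,w)$ is even (for $d(v,w)=0$ one uses $w=v$ directly, which is the even case). Starting from $G_0=T$ this yields the stated decreasing/increasing behaviour.

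For the bound, the key reduction is to compare, for each fixed $n$, the energies $\mathcal{E}_{G_n}(\hat w)$ and $\mathcal{E}_{T\setminus v}(w)$ by Lemma \ref{p2} applied to the disjoint bipartite pair $G_n$ (vertex $\hat w$) and $T\setminus v$ (vertex $w$). Thus it suffices to establish, in the quasi-order,
\[
(T\setminus v)\cup(G_n\setminus \hat w)\ \succeq\ G_n\cup\big((T\setminus v)\setminus w\big)\quad\text{when }d(v,w)\text{ is even,}
\]
and the reverse relation when $d(v,w)$ is odd. To make this tractable I would collapse the iterated construction: since all coalescences occur at the common copy of $v$, one has $G_n=T\circ W_n$ with $W_n:=B_1\circ\cdots\circ B_n$ (all identifications at $v$), again bipartite, and $G_n\setminus\hat w=(T\setminus w)\circ W_n$ because $w$ lies in the tree part.

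The heart of the argument is then a characteristic-polynomial computation. Expanding both $G_n=T\circ W_n$ and $(T\setminus w)\circ W_n$ with the coalescence identity $\phi_{X\circ W}=\phi_X\phi_{W\setminus v}+\phi_{X\setminus v}\phi_W-x\,\phi_{X\setminus v}\phi_{W\setminus v}$, I expect the terms carrying $\phi_{W_n}$ and $x\phi_{W_n\setminus v}$ to cancel, leaving
\[
\phi_{\mathrm{LHS}}-\phi_{\mathrm{RHS}}=\phi_{W_n\setminus v}\cdot\Delta,\qquad \Delta:=\phi_{T\setminus v}\phi_{T\setminus w}-\phi_T\phi_{T\setminus\{v,w\}},
\]
where $\Delta$ depends only on $T$. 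By the Desnanot–Jacobi (Dodgson) identity for $xI-A(T)$, $\Delta$ is the square of the $(v,w)$-minor, and for a tree this minor equals $\pm\phi_{T\setminus V(P)}$, where $P$ is the unique $v$–$w$ path; since $T\setminus V(P)$ is a forest, $\Delta=\phi_{T\setminus V(P)}^2$ has the sign pattern of a bipartite characteristic polynomial (nonnegative $b_{2k}$-coefficients after stripping the alternating signs). Multiplying by $\phi_{W_n\setminus v}$, which is of the same type, and tracking the degree shift should show that every coefficient of $\phi_{\mathrm{LHS}}-\phi_{\mathrm{RHS}}$ in the $b_{2k}$ expansion carries the constant sign $(-1)^{d(v,w)}$; this is precisely the quasi-order comparison required above, and hence, via Lemma \ref{p2}, the stated upper/lower bound. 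The case $d(v,w)=0$ is excluded exactly because $T\setminus v$ then no longer contains $w$.

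I expect the main obstacle to be this final computation, specifically the two facts that $\Delta$ is a perfect square and that the relevant tree minor is, up to sign, the characteristic polynomial of the forest $T\setminus V(P)$: without the forest interpretation the squared polynomial need not have nonnegative $b_{2k}$-coefficients, and the clean sign $(-1)^{d(v,w)}$ would be lost. A secondary bookkeeping point is verifying the degree offset — that the product $\phi_{W_n\setminus v}\cdot\phi_{T\setminus V(P)}^2$ enters the coefficient of $x^{N-2k}$ with the parity of $d(v,w)$ — together with the degenerate base cases $w=v$ and $n=0$ (the latter recovering the comparison $\mathcal{E}_T(w)\lessgtr\mathcal{E}_{T\setminus v}(w)$ for the first term of the sequence).
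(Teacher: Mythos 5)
Your argument is correct, and its first half is exactly the paper's: monotonicity is obtained by applying Theorem \ref{t1}, extended via Remark \ref{oc1} because $G_{n-1}$ is no longer a tree, to each step $G_{n}=G_{n-1}\circ B_{n}$, using that the image of the $v$--$w$ path in $G_{n-1}$ consists of bridges. For the bound, however, you take a genuinely different route. The paper deletes from $G_n$ the copy of the first edge $u_1u_2$ of the $v$--$w$ path: the component of $\hat w$ in the resulting graph is precisely the component of $T\setminus v$ containing $w$, so Corollary \ref{c0} (applied with $d(w,u_2)=d(v,w)-1$, which flips the parity) gives $\mathcal{E}_{T\setminus v}(w)\gtrless \mathcal{E}_{G_n}(\hat w)$ with the correct orientation in one line. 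You instead compare the two energies directly through Lemma \ref{p2}, and your anticipated computation does go through: writing $G_n=T\circ W_n$ with $W_n=B_1\circ\cdots\circ B_n$, the coalescence formula makes the $\phi_{W_n}$ and $x\phi_{W_n\setminus v}$ terms cancel, leaving
\[
\phi_{(T\setminus v)\cup(G_n\setminus\hat w)}-\phi_{G_n\cup((T\setminus v)\setminus w)}
=\phi_{W_n\setminus v}\,\bigl(\phi_{T\setminus v}\phi_{T\setminus w}-\phi_T\phi_{T\setminus\{v,w\}}\bigr)
=\phi_{W_n\setminus v}\,\phi_{T\setminus V(P)}^{2},
\]
the degree offset between the two sides is exactly $2\,d(v,w)$, and since the right-hand product is monic with the bipartite sign pattern, every coefficient difference $b_{2k}$ carries the constant sign $(-1)^{d(v,w)}$ (strictly for at least one $k$, from the leading term), which is the quasi-order hypothesis of Lemma \ref{p2}. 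What your route costs is importing the Desnanot--Jacobi identity and the fact that the $(v,w)$-minor of $xI-A(T)$ equals $\pm\phi_{T\setminus V(P)}$ --- standard, but outside the paper's toolkit, which relies only on the edge-deletion recursion of Lemma \ref{l4.4}; what it buys is independence from Corollary \ref{c0} (and hence from Lemma \ref{L1}) for this step, together with an explicit closed form for the coefficient differences rather than just their signs.
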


\begin{proof}
Since by definition $G_{n}=G_{n-1}\circ B_{n}$, if we apply Theorem \ref{t1} it is easy to show that $\{\mathcal{E}_{G_{n}}(\hat{w})\}_{n}$ is an increasing and a decreasing sequence when $d(v,w)$ is even and odd, respectively. On the other hand, given $w \in V(T\setminus v)$, denoting by  $P=\{u_{1}=v\sim u_{2}\sim \dots \sim u_{d(v,w)+1}=w\}$ the unique path in $T$ which connects $v$ and $w$. If we remove from $G_{n}$ the copy of $u_{1}u_{2}$, by Corollary \ref{c0} the result follows.

\end{proof}

Corollary \ref{c1} has an important implication which is not  obvious: Since for each $w \in V(T \setminus v)$, $\{\mathcal{E}_{G_{n}}(\hat{w})\}_{n}$ is a monotone sequence which is bounded, it is a convergent sequence. A natural question is of course what is this limit. For the particular case where $B_n=K_2$, and then $G_n=S_{n+1}\circ T$, we can calculate such limit. This is the content of next theorem.

\begin{prop}\label{t2}

Let $v_{c}$ and  $v_{h}$ be the center and the leaves of the start $S_{n+1}$ in the graph $S_{n+1} \circ T$ with $n \in \mathbb{N}$. Then, let $v \in V(S_{n+1} \circ T)\setminus\{v_{c},v_{h}\}$, $\mathcal{E}_{S_{n+1} \circ T}(v) \rightarrow \mathcal{E}_{ T\setminus v_{c}}(v)$ when $n \rightarrow \infty.$ Moreover,  $\mathcal{E}_{S_{n+1} \circ T}(v_{h}) \rightarrow 0$ and $\mathcal{E}_{S_{n+1} \circ T}(v_{c}) \rightarrow \infty$   when $n \rightarrow \infty.$ 
\end{prop}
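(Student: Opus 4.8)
The plan is to reduce all three claims to the Coulson-type integral of Lemma~\ref{p9} after computing the characteristic polynomial of $G_n:=S_{n+1}\circ T$ in closed form. Writing $v_c$ for the center (the coalescence vertex, a copy of a vertex of $T$) and feeding $\phi_{S_{n+1}}=x^{n-1}(x^2-n)$ and $\phi_{S_{n+1}\setminus v_c}=x^{n}$ into the coalescence identity for $\phi$, I expect the star contributions to telescope, giving
$$\phi_{G_n}(x)=x^{n-1}\bigl(x\,\phi_{T}(x)-n\,\phi_{T\setminus v_c}(x)\bigr).$$
This single identity is what makes every limit computable. The same computation with $T$ replaced by $T-v$ (for a vertex of $T$), with the center deleted, or with one leaf deleted, yields $\phi_{G_n-v}$, $\phi_{G_n-v_c}$ and $\phi_{G_n-v_h}$; in each case the common factor $x^{n-1}$ cancels in the ratio $\phi(G_n-\cdot)/\phi(G_n)$ that enters Lemma~\ref{p9}.

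For the main assertion ($v\in V(T)\setminus\{v_c\}$) I would evaluate at $x=\mathbf ix$ using the bipartite normalization $\phi_H(\mathbf ix)=(\mathbf ix)^{|V(H)|}f_H(x)$ with $f_H(x):=\sum_{k\ge0}b_{2k}(H)x^{-2k}\ge1$, exactly as in the proof of Theorem~\ref{t3}. A direct substitution turns the vertex-energy integrand into
$$g_n(x)=1-\frac{x^2 f_{T-v}(x)+n\,f_{T\setminus\{v,v_c\}}(x)}{x^2 f_{T}(x)+n\,f_{T\setminus v_c}(x)}.$$
Since $T-v\preceq T$ and $(T\setminus v_c)-v\preceq T\setminus v_c$ give $f_T\ge f_{T-v}$ and $f_{T\setminus v_c}\ge f_{T\setminus\{v,v_c\}}$, the function $g_n$ is nonnegative, and as $n\to\infty$ it converges pointwise to $1-f_{T\setminus\{v,v_c\}}(x)/f_{T\setminus v_c}(x)$. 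The key observation is that this pointwise limit is precisely the integrand produced by Lemma~\ref{p9} for $\mathcal E_{T\setminus v_c}(v)$, because $(T\setminus v_c)-v=T\setminus\{v,v_c\}$. Thus the whole statement reduces to interchanging limit and integral.

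The main obstacle is this interchange, which I would settle by dominated convergence. Dropping $n f_{T\setminus v_c}\ge0$ from the denominator of the first part and $x^2 f_T\ge0$ from that of the second bounds $g_n$, uniformly in $n$, by
$$\Bigl(1-\tfrac{f_{T-v}}{f_T}\Bigr)+\Bigl(1-\tfrac{f_{T\setminus\{v,v_c\}}}{f_{T\setminus v_c}}\Bigr),$$
whose integral over $\mathbb R$ equals $\pi\bigl(\mathcal E_T(v)+\mathcal E_{T\setminus v_c}(v)\bigr)<\infty$; this integrable majorant yields $\mathcal E_{G_n}(v)\to\mathcal E_{T\setminus v_c}(v)$. (The monotonicity recorded in Corollary~\ref{c1} is consistent with this and could instead drive a monotone-convergence argument.) For a leaf the same substitution collapses the integrand to $f_{T\setminus v_c}(x)/\bigl(x^2 f_T(x)+n f_{T\setminus v_c}(x)\bigr)$, which is positive, decreasing in $n$, and tends to $0$ pointwise, so monotone convergence gives $\mathcal E_{G_n}(v_h)\to0$. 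Finally, for the center I would avoid a separate estimate: since each leaf $v_h$ is adjacent to $v_c$, Lemma~\ref{p10} gives $\mathcal E_{G_n}(v_c)\ge 1/\mathcal E_{G_n}(v_h)\to\infty$.
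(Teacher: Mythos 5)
Your argument is correct, and it is genuinely different from the one in the paper. You compute $\phi_{S_{n+1}\circ T}(x)=x^{n-1}\bigl(x\,\phi_T(x)-n\,\phi_{T\setminus v_c}(x)\bigr)$ explicitly from the coalescence identity (the telescoping you anticipate does occur: the two terms $x^{n+1}\phi_{T\setminus v_c}$ cancel), feed it into the Coulson formula of Lemma \ref{p9}, and pass to the limit; your majorant works because $1-\frac{a+b}{A+B}=\frac{(A-a)+(B-b)}{A+B}\leq\bigl(1-\frac{a}{A}\bigr)+\bigl(1-\frac{b}{B}\bigr)$ with $a=x^2f_{T-v}\leq A=x^2f_T$ and $b=nf_{T\setminus\{v,v_c\}}\leq B=nf_{T\setminus v_c}$, and both bounding terms integrate to $\pi\mathcal{E}_T(v)$ and $\pi\mathcal{E}_{T\setminus v_c}(v)$ respectively. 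The paper instead never touches the integral formula at this stage: it squeezes the \emph{global} quantity $\mathcal{E}(S_{n+1}\circ T)-\mathcal{E}(T\setminus v_c)$ between $2\sqrt{n}$ and $2\sqrt{n+\deg_T(v)}$ via Lemmas \ref{p4} and \ref{p3}, splits the vertex set by parity of distance using the bipartite balance of Lemma \ref{p5}, and then uses the sign and monotonicity information from Theorem \ref{t1} and Corollary \ref{c1} to force each individual summand of the vanishing signed sums to converge to $\mathcal{E}_{T\setminus v_c}(v)$; the leaf and center limits come from comparison with $\mathcal{E}_{S_{n+1}}(v_h)=1/\sqrt{n}$ and $\mathcal{E}_{S_{n+1}}(v_c)=\sqrt{n}$ (and, as Remark \ref{syi} notes, the center case also follows from Lemma \ref{p10}, exactly as you argue). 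Your route is more self-contained and computational --- it does not need the main theorem, the energy subadditivity, or the bipartite balance, and it yields each limit directly rather than through convergence of sums --- at the cost of relying on the explicit factorization special to stars; the paper's route is softer and reuses its structural machinery, which is presumably why it was preferred there.
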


\begin{proof}
 First, note that applying Theorem \ref{t1} we have that $\mathcal{E}_{S_{n+1} \circ T}(v_{h}) \leq \frac{1}{\sqrt{n}}=\mathcal{E}_{S_{n+1}}(v_{h})$ and $\mathcal{E}_{S_{n+1}}(v_{c})=\sqrt{n}  \leq \mathcal{E}_{S_{n+1} \circ T}(v_{c})$. Then, $\mathcal{E}_{S_{n+1} \circ T}(v_{h}) \rightarrow 0$ and $\mathcal{E}_{S_{n+1} \circ T}(v_{c}) \rightarrow \infty$ when $n \rightarrow \infty.$
  On the other hand, by Lemmas \ref{p4} and \ref{p3}, we have
\begin{center}
    $2\sqrt{n}=\mathcal{E}(S_{n+1}) \leq \mathcal{E}(S_{n+1} \circ T)- \mathcal{E}(T\setminus v_{c}) \leq \mathcal{E}(S_{n+deg_{T}(v)+1})=2\sqrt{n+deg_{T}(v)}$.
\end{center}

\noindent

If we define $V_{1}=\{v \in V( T \setminus v_{c})~|~d(v,v_{c})~is~ odd\}$ and $V_{2}=\{v \in V( T \setminus v_{c})~|~d(v,v_{c})~is~ even\}$, applying Lemma \ref{p5}, Theorem \ref{t1} and the above inequalities, we obtain

\begin{center}
     $0 \leq \displaystyle\sum_{v \in V_{1}}\mathcal{E}_{S_{n+1} \circ T}(v)- \displaystyle\sum_{v \in V_{1}}\mathcal{E}_{T \setminus v_{c}}(v) \leq \sqrt{n+deg_{v}(T)}-n\mathcal{E}_{S_{n+1} \circ T}(v_{h})$,
\end{center}
\noindent
and
\begin{center}
     $\sqrt{n}-\mathcal{E}_{S_{n+1} \circ T}(v_{c})\leq \displaystyle\sum_{v \in V_{2}}\mathcal{E}_{S_{n+1} \circ T}(v)- \displaystyle\sum_{v \in V_{2}}\mathcal{E}_{T \setminus v_{c}}(v) \leq 0$.
\end{center}

\noindent
Since by Theorem \ref{t1}, 
$$\mathcal{E}_{S_{n+deg_{T}(v)+1}}(v_{h})=\frac{1}{\sqrt{n+deg_{T}(v)}} \leq \mathcal{E}_{S_{n+1} \circ T}(v_{h}),$$ 
and $$\mathcal{E}_{S_{n+1} \circ T}(v_{c})\leq \sqrt{n+deg_{T}(v)}=\mathcal{E}_{S_{n+deg_{T}(v)+1}}(v_{c}) ,$$
then, the following limits hold
\begin{center}
    $\displaystyle\lim_{n \to \infty}(\displaystyle\sum_{v\in V_{1}} (\mathcal{E}_{S_{n+1} \circ T}(v)-\mathcal{E}_{T \setminus v_{c}}(v)))= \displaystyle\sum_{v\in V_{1}} (inf_{n}\{\mathcal{E}_{S_{n+1} \circ T}(v)\}-\mathcal{E}_{T \setminus v_{c}}(v))$=0,
\end{center}

\noindent
and

\begin{center}
     $\displaystyle\lim_{n \to \infty}(\displaystyle\sum_{v\in V_{2}} (\mathcal{E}_{S_{n+1} \circ T}(v)-\mathcal{E}_{T \setminus v_{c}}(v)))= \displaystyle\sum_{v\in V_{2}} (sup_{n}\{\mathcal{E}_{S_{n+1} \circ T}(v)\}-\mathcal{E}_{T \setminus v_{c}}(v))$=0.
\end{center}

\noindent
Because of Corollary \ref{c1}, we know that $\mathcal{E}_{T \setminus v_{c}}(v)$ is a lower bound if $v \in V_{1}$ and  $\mathcal{E}_{T \setminus v_{c}}(v)$ is a upper bound if $v \in V_{2}$, hence we conclude the proof.
\end{proof}

\begin{figure}[ht]
\begin{center}
\usetikzlibrary{arrows}
\pagestyle{empty}

\definecolor{ffdazd}{rgb}{0,0,0}
\begin{tikzpicture}[line cap=round,line join=round,>=triangle 45,x=1cm,y=1cm]
\clip(-1.1398476080868958,-2.257903039183148) rectangle (3.963214829461329,2.085172713451595);
\draw (-0.8297357786612778,0.3) node[anchor=north west] {$\vdots$};
\draw (0.841422413243441,0.2) node[anchor=north west] {$\vdots$};
\draw (-0.9,-1.5) node[anchor=north west] {$S_{n+1}\circ T$}; 
\draw (-0.3817964694909407,1.470112852529334) node[anchor=north west] {\scriptsize $1$};
\draw (-0.8331814656548958,1.2002396482471174) node[anchor=north west] {\scriptsize $2$};
\draw (-1.0881623031826262,0.635388755563408) node[anchor=north west] {\scriptsize $3$};
\draw (-1.1,-0.2) node[anchor=north west] {\scriptsize $n$};

\draw [line width=0.4pt] (0,0)-- (-0.9857364255104072,0.16829646289202096);
\draw [line width=0.4pt,] (0,0)-- (-0.7237375449666366,0.6900753335728396);
\draw [line width=0.4pt] (0,0)-- (-0.29639921487245285,0.9550641368112371);
\draw [line width=0.4pt] (0,0)-- (-0.7213873210309514,-0.6925318281897137);
\draw [line width=0.4pt,dotted] (0,0)-- (0.75,1.04);
\draw [line width=0.4pt] (0.75,1.04)-- (1.544415127528584,1.58);
\draw [line width=0.4pt] (1.544415127528584,1.58)-- (2.019349164467898,1.04);
\draw [line width=0.4pt] (2.019349164467898,1.04)-- (2.599824098504837,1.88);
\draw [line width=0.4pt] (2.599824098504837,1.88)-- (3.123131046613896,1.64);
\draw [line width=0.4pt] (0.75,1.04)-- (1.562005277044855,1.04);
\draw [line width=0.4pt,dotted] (0,0)-- (0.849604221635884,0.36);
\draw [line width=0.4pt] (0.849604221635884,0.36)-- (1.6675461741424802,0.28);
\draw [line width=0.4pt] (1.6675461741424802,0.28)-- (2.441512752858399,0.64);
\draw [line width=0.4pt] (1.6675461741424802,0.28)-- (2.2744063324538257,0.04);
\draw [line width=0.4pt] (2.441512752858399,0.64)-- (2.8636763412489006,0.84);
\draw [line width=0.4pt] (2.8636763412489006,0.84)-- (3.3693931398416885,0.52);
\draw [line width=0.4pt] (2.8636763412489006,0.84)-- (3.5233069481090586,1.08);
\draw [line width=0.4pt,dotted] (0,0)-- (0.81,-0.9);
\draw [line width=0.4pt] (0.81,-0.9)-- (1.65,-0.92);
\draw [line width=0.4pt] (1.65,-0.92)-- (2.29,-0.86);
\draw [line width=0.4pt] (1.65,-0.92)-- (2.433329804294947,-0.3374100040585359);
\draw [line width=0.4pt] (2.433329804294947,-0.3374100040585359)-- (2.95707422732488,-0.27464879376034596);
\draw [line width=0.4pt] (2.433329804294947,-0.3374100040585359)-- (2.8158010605865424,-0.8394996864440553);
\draw [line width=0.4pt] (2.8158010605865424,-0.8394996864440553)-- (3.35,-0.9);
\draw [line width=0.4pt] (3.35,-0.9)-- (3.69,-1.14);
\draw [line width=0.4pt] (3.35,-0.9)-- (3.85,-0.56);

\draw (-0.2,-0.20561146243233705) node[anchor=north west] {$v$};
\begin{scriptsize}
\draw [fill=black] (0,0) circle (2.5pt);
\draw [fill=black] (-0.9857364255104072,0.16829646289202096) circle (2.5pt);
\draw [fill=black] (-0.7237375449666366,0.6900753335728396) circle (2.5pt);
\draw [fill=black] (-0.29639921487245285,0.9550641368112371) circle (2.5pt);
\draw [fill=black] (-0.7213873210309514,-0.6925318281897137) circle (2.5pt);
\draw [fill=black] (0.75,1.04) circle (2.5pt);
\draw [fill=black] (1.544415127528584,1.58) circle (2.5pt);
\draw [fill=black] (2.019349164467898,1.04) circle (2.5pt);
\draw [fill=black] (2.599824098504837,1.88) circle (2.5pt);
\draw [fill=black] (3.123131046613896,1.64) circle (2.5pt);
\draw [fill=black] (1.562005277044855,1.04) circle (2.5pt);
\draw [fill=black] (0.849604221635884,0.36) circle (2.5pt);
\draw [fill=black] (1.6675461741424802,0.28) circle (2.5pt);
\draw [fill=black] (2.441512752858399,0.64) circle (2.5pt);
\draw [fill=black] (2.2744063324538257,0.04) circle (2.5pt);
\draw [fill=black] (2.8636763412489006,0.84) circle (2.5pt);
\draw [fill=black] (3.3693931398416885,0.52) circle (2.5pt);
\draw [fill=black] (3.5233069481090586,1.08) circle (2.5pt);
\draw [fill=black] (0.81,-0.9) circle (2.5pt);
\draw [fill=black] (1.65,-0.92) circle (2.5pt);
\draw [fill=black] (2.29,-0.86) circle (2.5pt);
\draw [fill=black] (2.433329804294947,-0.3374100040585359) circle (2.5pt);
\draw [fill=black] (2.95707422732488,-0.27464879376034596) circle (2.5pt);
\draw [fill=black] (2.8158010605865424,-0.8394996864440553) circle (2.5pt);
\draw [fill=black] (3.35,-0.9) circle (2.5pt);
\draw [fill=black] (3.69,-1.14) circle (2.5pt);
\draw [fill=black] (3.85,-0.56) circle (2.5pt);

\end{scriptsize}
\end{tikzpicture}
\caption{
Graphical representation of the behavior observed in the Theorem \ref{t2}.} 
    \label{f6}
 \end{center}

 \end{figure}
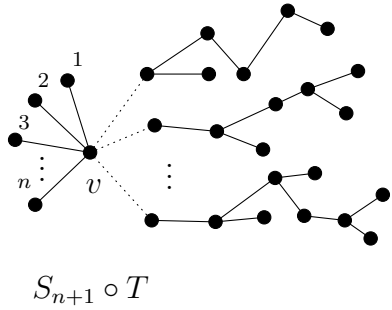

\begin{rem} \label{syi}
    By Lemma \ref{p10} we know that it was sufficient to show that $\mathcal{E}_{S_{n+1} \circ T}(v_{h}) \rightarrow 0$ when $n \rightarrow \infty.$ But, we proved both simultaneously using  Theorem \ref{t1}. Another result in the same direction of Theorem \ref{t1} is Theorem 7.3 in \cite{b9}, however, it is  not applicable  in the case above.
\end{rem}

Here is a possible explanation for the behavior observed in Corollary \ref{c1} and Theorem \ref{t2} by using a game theoretical approach.  By recruiting a new player at each step, the player $v$ increases its value and as a consequence the payoff of the other players is continually affected. Therefore, it seems natural to expect that the fair payoff offered in the long run to the remaining
players is at least the payoff they would receive if $v$ was not in the game,  i.e., what they would receive if they all decide to stop cooperating with $v$ and maintain the other alliances. In search of an adequate payoff, through the above perspective, we assure the players at odd distance that the most their payoff could decrease to would be the amount they would receive if $v$ was not part of the game, thus motivating them to stay in the grand coalition and thus continue to cooperate with $v$. On the other hand, it seems logical to think that players who are at an even distance will remain in the grand coalition as long as we assure them that the minimum their payoff could increase to would be the amount they would receive if $v$ was not part of the game. In conclusion,  whether their payoff at each step decreases or increases,  gave them the same payoff as they would receive  if $v$ was removed from the game and the other alliances were  maintained is strategically  the best option. Since, they end up being given the minimum payoff they "demanded" for remaining in the grand coalition and the payoff they could receive if $v$ was not part of the game and the other alliances were  maintained. In this way, are not tempted to stop cooperating with $v$, which implies to leave the grand coalition. It should be noted that the above tells us  that in the asymptotic behavior the edges incident on $v$ tend to disappear (see Figure \ref{f6}).

\section{Examples}

Given the above, it is valid to wonder about the possibility of finding analogous results to Theorem \ref{t2} for more general graphs $B_{n}$. In this line of analysis, in order to illustrate potential variations in the behavior demonstrated in Theorem \ref{t2}, we examine the graph $G_{n}$ constructed as in Corollary \ref{c1}, taking $T=K_{2}$, $B_{n}=S_{d_{n}+2}$ and $b_{n}$ any leave of $B_{n}$. Here, $\{d_{n}\}_{n \geq 1}$ represents a succession of natural numbers. Inspired by the behavior of the energy of vertex observed in Theorem \ref{t2}, in this case of study, we will analyze whether it is also verified that 
 $\mathcal{E}_{G_{n}}(v) \rightarrow \infty$ y $\mathcal{E}_{G_{n}}(u) \rightarrow 0$ when $n \rightarrow \infty$?

To address this objective, let us examine the following. Given $n \in \mathbb{N}$ if we  define $d(n)=\max\{d_1, d_2, \ldots, d_n\}$, according to the Theorem \ref{t1}, we can state that $\mathcal{E}_{G_{n}}(v) >\mathcal{E}_{H_{n,d(n)}}(v),$ and $\mathcal{E}_{G_{n}}(u)  < 
 \mathcal{E}_{H_{n,d(n)}}(u),$ where $H_{n,d(n)}$ is constructed as $G_n$ but with the stars $S_{d(n)+2}$. Then, using iterately that $H_{n,d(n)}=H_{n-1,d(n)} \circ S_{d(n)+2}$ for $n \geq 1$ we can observe that
$$\phi_{H_{n,d(n)}}(x)=x^{n \cdot (d(n)-1)}(x^{2}-d(n))^{n-1}(x^{4}-(n+d(n)+1)x^{2}+d(n)).$$ Therefore, the spectra of this graph is $\{0,\pm\sqrt{d(n)},\pm \sqrt{\frac{n+d(n)+1 \pm \sqrt{ (n+d(n)+1)^{2}-4d(n)} } {2}  }\} .$

Since there are only 7 distinct eigenvalues, using \eqref{moments} the weights $p_{1 i}$ with $i=1,2,\dots,7$ for $u$ can be calculated  with a 7 by 7 system of linear equations. Obtaining that $p_{11}=p_{12}=p_{13}=0$, 

$$p_{14}=p_{15}=\frac{\sqrt{ (n+d(n)+1)^{2}-4d(n)}-n-d(n)+1  }{4\sqrt{ (n+d(n)+1)^{2}-4d(n)  }},$$
and
$$p_{16}=p_{17}=\frac{\sqrt{ (n+d(n)+1)^{2}-4d(n)}+n+d(n)-1  }{4\sqrt{ (n+d(n)+1)^{2}-4d(n)  }}.$$

Thus, by equation (\ref{e1})

$$\mathcal{E}_{H_{n,d(n)}}(u)=\frac{\sqrt{(n+d(n)+1)^2-4 d(n)}(\sqrt{d(n)}+1)+n+1-d(n)+\sqrt{d(n)}(n+d(n)+1)}{\sqrt{2}\sqrt{(n+d(n)+1)^2-4 d(n)}\sqrt{\sqrt{(n+d(n)+1)^2-4 d(n)}+n+d(n)+1}}.$$

Analyzing the previous expression we can see that,  if $\frac{d(n)}{n}  \rightarrow 0 $ when $n \rightarrow \infty$, then $\mathcal{E}_{H_{n,d(n)}}(u) \rightarrow 0$ when $n \rightarrow \infty$. Regarding the graph $G_{n}$, the above tells us that the asymptotic behavior of $\mathcal{E}_{G_{n}}(v)$ and $\mathcal{E}_{G_{n}}(u)$ depends in part on how it behaves in the limit $d(n)/n$. Revealing, in particular, that if the maximum degree of the graphs coalescing with $K_{2}$ can be "controlled" such that it grows slower than $deg_{G_{n}}(v)=n+1$, then according to Lemma \ref{p10} the behavior holds, i.e., $\mathcal{E}_{G_{n}}(v) \rightarrow \infty$ y $\mathcal{E}_{G_{n}}(u) \rightarrow 0$ when $n \rightarrow \infty$.  As an example, when the sequence $\{d_{n}\}_{n \geq 0}$ is upper bounded. Rewriting the above condition in terms of the monotonicity of the sequence we observe that if it is decreasing, the result is always preserved. On the other hand, if the sequence is increasing, it does so when it is convergent, that is, when it tends to stabilize.

Likewise, the result obtained for the graph $H_{n,d(n)}$ give us examples that allow us to analyze the way in which the energy of vertex behaves when successive coalescences are made at different vertices of the graph. Specifically, it shows that if we do not coalesce in an "organized" way by controlling the degrees and where the coalescence will be made, we can obtain different results.

At this point, we already have examples where the behavior is maintained; thus, curiosity is aroused to explore other possibilities. Towards this aim, if we apply the Theorem \ref{t3} to the graph $G_{n}$ we 
get:

$$\mathcal{E}_{G_{n}}(v) \leq 1 + \sum\limits_{i=1}^{n} \frac{1}{\sqrt{d_{i}+1}}.
$$

Therefore, if the series converges, we have that $\mathcal{E}_{G_{n}}(v) \nrightarrow \infty$ when $n \rightarrow \infty$. Furthermore, the following is corroborated: $\mathcal{E}_{G_{n}}(u) \nrightarrow  0$, when $n \rightarrow \infty$, according to Lemma \ref{p10}. Thus, with the help of the series we have a sufficient condition to find behaviors totally opposite to that of Theorem \ref{t2}. It is worthwhile  to note that the conditions found so far for obtaining  the same or totally opposite behavior are mutually exclusive, as expected. This is because, the series is lower bounded by $n/(d(n)+1)$.

\section{Conclusion}
In this study, we examined the effect on the energy per vertex of a tree after it was coalesced with a bipartite graph. In particular, it was shown that the energies decrease and increase if the vertices are at odd and even distance, respectively. Thus, an alternating behavior similar to that frequently observed in physical and chemical properties of organic compounds was evidenced. On the other hand, when analyzing the long-term impact produced when successive coalescences are made with respect to a fixed vertex, it was concluded that, in some cases, the edges incident on the vertex tend to disappear. In fact, by interpreting the energy of vertices as a payoff vector in a cooperative game, a possible explanation of the aforementioned behaviors can be obtained. Finally, it would be pertinent to examine the behavior of energy per vertex in bipartite graphs, without limiting it only to the merging vertex or to vertices that lie on paths formed by edge cuts.  We hope to develop this in the future and, at the same time, demonstrate the usefulness of studying vertex energy using the game theory approach.

\subsection*{Acknowledgment}

The authors were supported by CONACYT Grant CB-2017-2018-A1-S-9764.

\end{document}